\documentclass[oneside]{amsart}
{}
\usepackage[dvipsnames,svgnames,x11names]{xcolor}
\usepackage{geometry}
\usepackage[nopatch=footnote]{microtype}

\usepackage{amsmath}
\usepackage{amsthm}
\usepackage{mathtools}

\usepackage{mathrsfs}
\usepackage{stmaryrd}

\usepackage{extarrows}
\usepackage[shortlabels]{enumitem}
\usepackage{tensor}
\usepackage{physics2}
  \usephysicsmodule{ab,xmat}
\usepackage{fixdif}
  \newcommand{\dd}{\d}
\usepackage{derivative}

\usepackage{graphicx}
\usepackage{subcaption}
\usepackage{comment}

\usepackage[colorlinks=true,linkcolor=blue]{hyperref}
\usepackage[hyperref=true,backend=biber,style=alphabetic,backref=true,url=false]{biblatex}

\usepackage[warnings-off={mathtools-colon,mathtools-overbracket}]{unicode-math}
\usepackage[default]{fontsetup}
\newcommand{\lap}{\increment}
\renewcommand{\emptyset}{⌀}

\DeclareMathOperator{\Ric}{Ric}

\newcommand{\ie}{\emph{i.e.}}

\newcommand{\eps}{\varepsilon}
\newcommand{\vphi}{\varphi}

\newcommand{\lrr}{\longrightarrow}

\newcommand{\op}{\operatorname}
\newcommand{\ov}{\overline}

\let\originalleft\left
\let\originalright\right
\renewcommand{\left}{\mathopen{}\mathclose\bgroup\originalleft}
\renewcommand{\right}{\aftergroup\egroup\originalright}

\theoremstyle{plain}\newtheorem{theorem}{Theorem}
\theoremstyle{definition}\newtheorem{definition}[theorem]{Definition}
\theoremstyle{definition}\newtheorem{example}[theorem]{Example}
\theoremstyle{definition}
\theoremstyle{plain}
\theoremstyle{plain}\newtheorem{corollary}[theorem]{Corollary}
\theoremstyle{plain}\newtheorem{lemma}[theorem]{Lemma}
\theoremstyle{plain}
\theoremstyle{plain}
\theoremstyle{plain}
\theoremstyle{plain}
\theoremstyle{remark}
\theoremstyle{definition}
\theoremstyle{definition}
\theoremstyle{definition}
\theoremstyle{plain}
\theoremstyle{plain}
\theoremstyle{remark}\newtheorem*{remark}{Remark}
\theoremstyle{remark}
\numberwithin{equation}{section}
\numberwithin{theorem}{section}
\numberwithin{figure}{section}

\allowdisplaybreaks{}

\title{A Frankel type theorem in Euclidean and hyperbolic spaces}
\author{Haotian Xue}

\begin{document}

\begin{abstract}
  We prove that a connected mean convex region in $\mathbb{R}^{n+1}$ with at least two boundary components
  cannot have strictly positive mean curvature. This answers a question of Gromov. We also obtain estimates
  for how quickly the mean curvature must decay at infinity, and generalize this result to hyperbolic space. 
\end{abstract}

\maketitle

\section{Introduction}
The celebrated halfspace theorem~\cite{hoffmanStrongHalfspaceTheorem1990a} by Hoffman and Meeks implies that
in \(\mathbb{R}^3\), a mean convex region with two boundary components is the slab between two parallel
planes. This fails in higher dimensions, since a catenoid of dimension greater than 3 could fit between
parallel planes. In this paper we study a variant of this problem where the boundary components are assumed
to have mean curvature with positive lower bounds. In particular, we prove

\begin{theorem}\label{gromov-conj}
  Let \(\Omega\) be a connected mean convex region in \(\mathbb{R}^{n+1}\) whose boundary has at least two
  components. Then each boundary component \(\Sigma\) is non-compact and \(\inf H_\Sigma=0\), where
  \(H_\Sigma\) is the mean curvature of \(\Sigma\) with respect to inward normals.
\end{theorem}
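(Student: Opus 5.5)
The plan is to prove the two assertions separately. Non-compactness will follow from an elementary extremal-point argument. The statement $\inf H_\Sigma=0$ will follow from a Riccati-type comparison for the distance function to $\Sigma$, combined with a maximum principle at a second boundary component $\Sigma'$. Throughout, $H$ is the (unnormalized) sum of principal curvatures with respect to the normal pointing into $\Omega$.

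\emph{Step 1 (non-compactness).} Suppose some component $\Sigma$ is compact. It bounds a compact body $K$, and $\Omega$ lies either outside $K$ or inside $K$.
\begin{itemize}
\item If $\Omega$ lies outside $K$, the inward normal of $\Omega$ along $\Sigma$ is the outward normal of $K$. At a point of $\Sigma$ farthest from the origin, $\Sigma$ is strictly convex toward $K$, so $H_\Sigma<0$ there. This contradicts mean convexity.
\item If $\Omega$ lies inside $K$, then $\Omega$ is bounded. Hence every other component $\Sigma'$ is compact and bounds a body $K'$ disjoint from $\Omega$. Applying the previous case to $\Sigma'$ gives a contradiction.
\end{itemize}
So every boundary component is non-compact.

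\emph{Step 2 (focal bound).} Suppose instead that $H_\Sigma\ge h>0$ on $\Sigma$, and let $d=d_\Sigma$ be the distance to $\Sigma$ inside $\overline\Omega$. Along a unit-speed minimizing segment $\gamma$ from a foot point on $\Sigma$, the level sets of $d$ are smooth up to the cut locus. Their mean curvature $H(t)$ with respect to $\nabla d$ satisfies the Riccati inequality $H'\ge H^2/n$. Hence
\[
H(t)\ \ge\ \frac{nh}{n-ht}\ \ge\ h,
\]
and focal points occur before $t=n/h$. Two consequences follow. First, $\Omega$ lies in the $n/h$-neighbourhood of $\Sigma$, so $d$ is bounded on $\Sigma'$. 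Second, wherever $d$ is smooth, its level sets have mean curvature at least $h$ in the direction of $\nabla d$. At non-smooth points I would use Calabi's trick: $d$ is bounded above by the smooth function $t+\operatorname{dist}(\cdot,\Sigma_{t})$, where $\Sigma_t$ is a slightly pushed-in level set near the foot point. This gives the same one-sided barrier in the viscosity sense.

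\emph{Step 3 (touching $\Sigma'$).} Since $\Sigma'$ is non-compact, the minimum of $d$ on $\Sigma'$ need not be attained. I would therefore minimize
\[
g(x)=d(x)+\eps\sqrt{1+|x-x_0|^2}
\]
over $\Sigma'$. This $g$ is proper on $\Sigma'$, since $d\ge 0$ and the penalty tends to infinity, so it attains a minimum at some point $q$. The penalty has gradient and Hessian of size $O(\eps)$. Consequently the sublevel hypersurface $\{d+\eps\sqrt{1+|x-x_0|^2}=g(q)\}$, or its Calabi barrier near $q$, has mean curvature at least $h-C\eps>0$ with respect to the normal pointing away from $\Sigma$. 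Moreover, $\Sigma'$ lies on the far side of this hypersurface and touches it at $q$.

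The comparison principle then gives that the mean curvature of $\Sigma'$ at $q$, computed with respect to the normal pointing away from $\Omega$, is at least $h-C\eps>0$. But mean convexity of $\Omega$ says exactly that the mean curvature of $\Sigma'$ with respect to the inward normal is $\ge 0$, i.e.\ with respect to the outward normal it is $\le0$. This contradiction shows $\inf H_\Sigma=0$.

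The main obstacle is Step 3: making the touching argument rigorous when the infimum of $d$ on $\Sigma'$ is not attained and $d$ may fail to be smooth at the touching point. I would handle this with the bounded-gradient penalization above together with Calabi's upper barrier, and I would check carefully that the $O(\eps)$ error in mean curvature is uniform, since it depends only on bounds on $\nabla\varphi$ and $\nabla^2\varphi$ for the penalty $\varphi$.
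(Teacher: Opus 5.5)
Your Step 1 is correct, and it reaches non-compactness more directly than the paper, which derives it from its distance lemma (translation plus maximum principle) and the corollary. The focal bound in Step 2 is also fine. The gap is in Step 3, exactly where you suspected, and the claim that the error depends only on $\nabla\varphi$ and $\nabla^2\varphi$ is false.

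At the minimizer $q$, the tangent plane $T_q\Sigma'$ is orthogonal to $\nabla d+\varepsilon\nabla\varphi$, not to $\nabla d$. So $\Sigma'$ is tilted by an angle of order $\varepsilon$ relative to the level set $\{d=t\}$, where $t=d(q)$. Write out $\Delta_{\Sigma'}(d+\varepsilon\varphi)(q)\ge 0$. The trace of $\nabla^2 d$ over $T_q\Sigma'$ equals $-H(t)-\nabla^2 d(\hat w,\hat w)$, where $\hat w$ is the component of the unit normal of $\Sigma'$ tangent to the level set, with $|\hat w|\le 2\varepsilon$. What you actually get is
\[
h\;\le\; n\varepsilon+\textstyle\sum_i \kappa_i(t)\,\hat w_i^2 ,
\]
where $\kappa_i(t)$ are the principal curvatures of the level set (or of your barrier) at $q$.

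The last term can be as large as $\varepsilon^2\kappa_{\max}(t)$, and the hypotheses control only $H$, not $|A|$. Suppose $\Sigma$ is saddle-shaped near the foot point, with principal curvatures of both signs of size comparable to $1/t$; this is compatible with $H\ge h$. Then the term is of order $\varepsilon^2/t$. Near a focal point it is unbounded. Also, pushing the foot piece in along its normal does not remove focal points of a hypersurface, since the focal point of the parallel piece along the segment is still $q$. So the Calabi barrier you describe would need modification as well.

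Your scheme does work when $\inf_{\Sigma'}d>0$, because the error is then $O(\varepsilon^2/\inf d)$. Frankel's second variation at a minimizing pair of $|x-y|+\varepsilon\varphi(x)$ gives the same bound without needing $d$ to be smooth. But when $\Sigma'$ is asymptotic to $\Sigma$, nothing prevents $t=d(q)$ from being much smaller than $\varepsilon^2$. For example, $\Sigma'$ could approach $\Sigma$ in dips whose depths decay super-exponentially in $|x|$, and the minimizer of $d+\varepsilon\varphi$ would sit in one of them. That asymptotic configuration is exactly what the theorem must exclude, so the argument fails precisely where it is needed.

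The missing idea is to penalize conformally, i.e.\ multiplicatively, rather than additively, and to localize to a compact ball. The paper minimizes length in $\tilde g=u^{-2}g$, with $u=(1-r^2R^{-2})^2$, between $\Sigma_1\cap B(0,R)$ and $\Sigma_2\cap B(0,R)$.
\begin{itemize}
\item \textbf{Existence.} A minimizer exists because $\Sigma_1\cap\overline{B(0,R)}$ and $\Sigma_2\cap\overline{B(0,R)}$ are compact and disjoint, hence at positive Euclidean distance, while $\tilde g$ blows up at $\partial B(0,R)$. So behaviour at infinity plays no role.
\item \textbf{No tilt.} Conformal changes preserve angles, so the $\tilde g$-geodesic meets both components orthogonally. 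The traced second variation (with $\varphi\equiv 1$ in $\mathbb{R}^{n+1}$) then contains only $H_1(p)$, $H_2(q)$ and the integral of $J_2\le 4nR^{-2}$ along $\gamma$. The full second fundamental form never appears.
\item \textbf{Length control.} Comparing with a fixed curve $\gamma_0$ through the origin gives $\tilde L<\tfrac{7}{6}L_0$. Hence $c_1+c_2\le 5nL_0R^{-2}\to 0$ as $R\to\infty$.
\end{itemize}
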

This resolves a conjecture by Gromov (\cite{gromovMeanCurvatureLight2020}, conjecture 1). Moreover, it can be
viewed as an analog of the Frankel property, \ie\ there do not exist two disjoint closed minimal
hypersurfaces in a positive curvature space~\cite{frankelManifoldsPositiveCurvature1961}. Here we trade the
positivity ofbackground curvature with positivity of boundary curvature. 
/ass
\begin{remark}
We note that Jian Ge has recently independently obtained this result~\cite{geUnboundedMeanConvex2026} using a
related method. In this paper we also prove a quantitative estimate for the curvature decay rate, as well as a
generalization to hyperbolic space. 
\end{remark}

To state more results, we introduce the following notation.
\begin{definition}\label{notation}
  Throughout this paper, \((M,g)\) refers to a \((n+1)\)-dimensional complete Riemannian manifold (usually a
  space form), and \(\Omega\subset M\) is called a \strong{regular region} or simply \strong{region} if it is
  a smooth open subset whose boundary is a smooth properly embedded hypersurface.

  Denote by \(H\) the mean curvature vector of \(\partial\Omega\), \(\nu\) the \strong{inward} unit normal.
  Let \(\Sigma_1,\Sigma_2,\ldots\) be the connected components of \(\partial\Omega\), and we write
  \(H_1,H_2,\nu_1,\nu_2\) for the mean curvature and unit normal on \(\Sigma_1\) and \(\Sigma_2\) respectively.

  When \(H\) is compared with a number, say \(H\ge c\), we mean \(H\cdot\nu\ge c\), and \(\inf_K H_{\Sigma_i}\)
  means \(\inf_{\Sigma_i\cap K}H_i\cdot\nu_i\). In particular, \(\Omega\) is called \emph{mean convex} if
  \(H\ge 0\) everywhere on \(\partial\Omega\).
\end{definition}

We have the following generalization of theorem~\ref{gromov-conj}:
\begin{theorem}\label{main-thm}
  Let \((M^{n+1},g)\) be the space form of constant curvature \(-\kappa^2\). Let \(\Omega\subset M\) be a 
  connected region with at least two boundary components \(\Sigma_1\) and \(\Sigma_2\), then \[
    \inf H_{\Sigma_1}+\inf H_{\Sigma_2}
    \le 2n\kappa\tanh\left(\frac{\kappa}{2}d(\Sigma_1,\Sigma_2)\right)\le 2n\kappa
  .\] In particular, when \(\kappa>0\), the last inequality is strict.
\end{theorem}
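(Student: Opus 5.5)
The plan is to compare the two boundary components along a shortest path between them, using Riccati comparison for the mean curvature of the parallel hypersurfaces of \(\Sigma_1\).

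\emph{Setup.} Write \(a=\inf H_{\Sigma_1}\), \(b=\inf H_{\Sigma_2}\) and \(d=d(\Sigma_1,\Sigma_2)\), where the distance is taken inside \(\overline\Omega\). If \(a+b\le 0\) there is nothing to prove, so I assume \(a+b>0\).

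Suppose first that \(d\) is realized by a geodesic segment \(\gamma\colon[0,d]\to\overline\Omega\), running from \(p\in\Sigma_1\) to \(q\in\Sigma_2\). I may take \(\gamma\) to have no interior point on \(\partial\Omega\): if it did, I would replace \(\Sigma_2\) by the first component it hits, which is at least as close. Minimality then gives three facts.
\begin{itemize}
\item \(\gamma\) leaves \(p\) along \(\nu_1\) and arrives at \(q\) along \(-\nu_2\).
\item There is no focal point of \(\Sigma_1\) along \(\gamma\) on \([0,d)\).
\item The function \(r=d(\cdot,\Sigma_1)\) is smooth near \(\gamma((0,d))\), so the level sets \(\{r=t\}\) are smooth hypersurfaces near \(\gamma(t)\).
\end{itemize}
Let \(u(t)\) be \(1/n\) times their mean curvature, measured with respect to the forward normal \(\gamma'\).

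\emph{Riccati comparison.} Since the space form has curvature \(-\kappa^2\), the trace of the Riccati equation together with Cauchy--Schwarz gives \(u'\ge u^2-\kappa^2\), with \(u(0)\ge a/n\). The level set \(\{r\le d\}\) lies on the side of \(\Sigma_2\) away from \(\Omega\)'s interior near \(q\): every point of \(\Omega\) near \(q\) is at distance at least \(d\) from \(\Sigma_1\). So \(\Sigma_2\) touches \(\{r=d\}\) from one side at \(q\), and the comparison principle gives \(u(d)\le -b/n\). If an endpoint is not smooth, I would use Calabi's trick: shift \(p\) slightly along \(\gamma\).

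Now compare with the model solution \(v'=v^2-\kappa^2\), \(v(0)=a/n\), which is \(v(t)=-\kappa\tanh(\kappa t-\alpha)\) with \(\alpha=\operatorname{artanh}(a/(n\kappa))\). If \(a\ge n\kappa\), then \(v\), and hence \(u\), blows up to \(+\infty\) before time \(d\), or at least cannot become negative. That produces a focal point or contradicts \(u(d)<0\). In the other case \(u\ge v\) on \([0,d]\), and with \(\beta=\operatorname{artanh}(b/(n\kappa))\) this gives \(-b/n\ge-\kappa\tanh(\kappa d-\alpha)\), that is, \(\alpha+\beta\le\kappa d\).

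Since \(\tanh\) is concave on \([0,\infty)\) and increasing,
\[
  \tfrac{a}{n\kappa}+\tfrac{b}{n\kappa}=\tanh\alpha+\tanh\beta\le 2\tanh\left(\tfrac{\alpha+\beta}{2}\right)\le 2\tanh\left(\tfrac{\kappa d}{2}\right),
\]
which is the claimed inequality. The signs need care when one of \(a,b\) is negative; then I would use the monotonicity of \(\tanh\) and \(\alpha+\beta\le\kappa d\) directly. The final bound by \(2n\kappa\) is immediate, and it is strict for \(\kappa>0\) when \(d<\infty\). When \(d=\infty\), the same argument with slabs of width \(d'\to\infty\) still only gives \(\le\); I would need to check separately whether strictness holds in that case. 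The Euclidean case \(\kappa=0\) follows either as the limit \(\kappa\to0\) or directly from \(u'\ge u^2\), which gives \(a+b\le0\).

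\emph{Main obstacle: \(d\) need not be attained.} Both \(\Sigma_i\) may be noncompact, and this is exactly the situation of Theorem~\ref{gromov-conj}. My first approach would be an Ekeland-type perturbation. Fix a base point \(o\) and minimize \(d_\Omega(x,y)+\varepsilon\bigl(\rho(x)+\rho(y)\bigr)\) over \(x\in\Sigma_1\), \(y\in\Sigma_2\), with \(\rho=d(o,\cdot)\) smoothed. This functional is proper, so a minimizer exists and its distance is within \(O(\varepsilon)\)-controlled error of the infimum. At the minimizing pair, the first- and second-order conditions change the boundary inequalities only by \(O(\varepsilon)\), because \(\rho\) has bounded Hessian away from \(o\) in the space form. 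I would then rerun the comparison with \(a-C\varepsilon\) and \(b-C\varepsilon\) and let \(\varepsilon\to0\).

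A second, smaller issue is that \(\Omega\) may not be convex, so the minimizing path could touch \(\partial\Omega\) at interior points. The reduction to the first boundary component hit handles this: the statement is monotone in \(d\) and the infima only increase when passing to that component.
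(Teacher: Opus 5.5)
\textbf{The non-attained case has a genuine gap.} Your argument when $d$ is attained is a valid alternative to the paper's: a Riccati/Heintze--Karcher comparison along a normal geodesic instead of a second variation. But the non-attained case, which is where the theorem has its content, is not handled.

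At a minimizer $(x,y)$ of $d(x,y)+\varepsilon(\rho(x)+\rho(y))$, the first-order condition gives $\gamma'(0)^{\top}=\varepsilon\nabla^{\Sigma_1}\rho(x)$, and similarly at $y$. So $\gamma$ meets $\Sigma_1$ and $\Sigma_2$ at an angle $\theta=O(\varepsilon)$ off the normals. Consequently $x$ is not a foot point of $y$, and $\Sigma_2$ does not touch a level set of $r=d(\cdot,\Sigma_1)$ at $y$. The comparison you set up therefore cannot simply be ``rerun''.

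The natural replacement is the tilted wavefront $\phi(z)=\inf_{x'\in\Sigma_1}\bigl(d(x',z)+\varepsilon\rho(x')\bigr)$, which satisfies $|\nabla\phi|=1$ and $\phi=\varepsilon\rho$ on $\Sigma_1$. Differentiate along $\Sigma_1$-geodesics and use $\nabla^2\phi(\gamma',\cdot)=0$; you find
\[
  \Delta\phi(x)=-c\bigl(H_1+\tan^2\theta\,A_1(e,e)\bigr)+O(\varepsilon),\qquad c=\cos\theta-\varepsilon\langle\nabla\rho,\nu_1\rangle .
\]
Here $e$ is the unit tilt direction in $T_x\Sigma_1$, and $A_1$ is the second fundamental form, normalized so that $\operatorname{tr}A_1=H_1$. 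The term $\tan^2\theta\,A_1(e,e)$ is not controlled by any lower bound on $H_1$. At $y$ the analogous term is the Hessian of $\phi$ in a direction transverse to $\gamma$, which is unbounded below near focal points. So the error is not the $O(\varepsilon)$ coming from $\nabla^2\rho$; the bounded Hessian of $\rho$ is not where the difficulty lies. The claimed shift to $a-C\varepsilon$, $b-C\varepsilon$ is unjustified.

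\textbf{What would close it.} The second-order conditions at the endpoints give one-sided bounds on the full second fundamental forms. At $x$ you get $c\,A_1(V,V)\le\nabla^2 d(\cdot,y)(V,V)+C\varepsilon\le\kappa\coth(\kappa\ell_\varepsilon)+C\varepsilon$ (read $1/\ell_\varepsilon$ if $\kappa=0$). At $y$ you get the same kind of bound from the universal upper bound on the wavefront's shape operator. Writing $A_1(e,e)=H_1-\sum_{j\ge2}A_1(V_j,V_j)$ then turns the tilt error into $O(\varepsilon^2\coth(\kappa\ell_\varepsilon))$. That suffices when $d>0$.

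When $d=0$, however, $\ell_\varepsilon\to0$, so you must also choose $\varepsilon_k\to0$ with $\varepsilon_k^2/\ell_{\varepsilon_k}\to0$. This is possible: for the minimum value $m(\varepsilon)$ one has $(m/\varepsilon)'=-\ell_\varepsilon/\varepsilon^2$ while $m(\varepsilon)/\varepsilon\to\infty$.

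\textbf{How the paper avoids this.} It passes to the conformal metric $\tilde g=u^{-2}g$ with $u=(1-r^2R^{-2})^2$. A $\tilde g$-minimizing geodesic then exists and is orthogonal to both boundaries. The cost is error terms of size $O(R^{-1})$, which are controlled because $u$ varies little along $\gamma$ (Lemma~\ref{shortness-lemma}).

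\textbf{Smaller points.}
\begin{itemize}
\item Your reduction ``replace $\Sigma_2$ by the first component hit'' is invalid, since $\inf H$ on another component says nothing about $\inf H_{\Sigma_2}$. Instead use the distance in $M$ and the region $B_1\cap B_2$ bounded by $\Sigma_1$ and $\Sigma_2$ alone, as the paper does.
\item At $q$, what holds is $\Sigma_2\subset\{r\ge d\}$, not that all of $\Omega$ near $q$ lies there.
\item $d<\infty$ always, so strictness for $\kappa>0$ is automatic and needs no separate check.
\end{itemize}
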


In fact for \(\mathbb{R}^{n+1}\), we are able to prove a quantitative estimate of the curvature decay rate:
\begin{theorem}\label{main-thm-est}
  Let \(\Omega\subset\mathbb{R}^{n+1}\) be a connected region with at least two boundary components
  \(\Sigma_1\) and \(\Sigma_2\). Assume in addition both \(\Sigma_1\) and \(\Sigma_2\) are non-compact. Let
  \(A_R=\{R/3<|x|<R\}\), then \[
    \inf_{A_R}H_{\Sigma_1}+\inf_{A_R}H_{\Sigma_2}\le 32nR^{-1}
  ,\] for all sufficiently large \(R\).
\end{theorem}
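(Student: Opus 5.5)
The plan is to localize the Frankel-type second-variation argument that underlies Theorem~\ref{main-thm}. We find a length-minimizing curve $\gamma$ joining $\Sigma_1$ to $\Sigma_2$, not in the Euclidean metric but in a conformally changed metric $\hat g=e^{2u}g$ on $\overline\Omega\cap\{R/3<|x|<R\}$. The weight $u=u(|x|)$ is chosen so that three things hold:
\begin{enumerate}[(a)]
\item $\hat g$ is complete toward the two spheres $|x|=R/3$ and $|x|=R$, so minimizers are trapped in $A_R$;
\item $\hat g$ has $\Ric_{\hat g}(\dot\gamma,\dot\gamma)\ge0$ along curves, or at least a controlled negative part;
\item $|\nabla u|\le C/R$ on most of $A_R$.
\end{enumerate}
A natural model is a perturbation of the cylindrical metric $|x|^{-2}g$, which is isometric to $\mathbb{R}\times S^n$ and has $\Ric\ge0$, with $u$ blowing up logarithmically at the two ends of the annulus.

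\emph{Step 1 (existence).} Since $\Sigma_1$ and $\Sigma_2$ are non-compact and properly embedded, and both meet a fixed compact set, for $R$ large each $\Sigma_i$ crosses every sphere $|x|=r$ with $r\in(R/3,R)$. Hence both have pieces in the middle of $A_R$. Using completeness of $\hat g$ near $\partial A_R$ and properness of the $\Sigma_i$, a minimizing sequence of curves from $\Sigma_1$ to $\Sigma_2$ in $\overline\Omega\cap A_R$ subconverges to a $\hat g$-shortest curve $\gamma$. Its endpoints $p\in\Sigma_1$, $q\in\Sigma_2$ lie in the open annulus.

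\emph{Step 2 (first and second variation).} $\gamma$ is a $\hat g$-geodesic meeting $\Sigma_1$ and $\Sigma_2$ orthogonally. We vary its endpoints along $\Sigma_1$ and $\Sigma_2$ by $n$ parallel orthonormal fields $E_1,\dots,E_n$ orthogonal to $\dot\gamma$, and sum the second variations. This yields
\[
  0\le \sum_k \delta^2L(E_k)=-\hat H_1(p)-\hat H_2(q)-\int_\gamma \Ric_{\hat g}(\dot\gamma,\dot\gamma),
\]
exactly as in Frankel's argument. We then convert back to Euclidean quantities via
\[
  \hat H=e^{-u}\bigl(H-n\,\partial_\nu u\bigr).
\]
With $\Ric_{\hat g}\ge0$ this gives $H_1(p)+H_2(q)\le n\bigl(|\nabla u|(p)+|\nabla u|(q)\bigr)$, up to the factors $e^{-u(p)}$ and $e^{-u(q)}$. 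These factors must be handled by arranging that $u$ is nearly constant where the endpoints can lie, or by choosing a weight with the right homogeneity. The profile of $u$ is tuned so that the final bound is $32n/R$. Since $\inf_{A_R}H_{\Sigma_1}+\inf_{A_R}H_{\Sigma_2}\le H_1(p)+H_2(q)$, this gives the theorem.

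\emph{Main obstacle.} The difficulty is twofold, and I expect it to be the hard part.

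First, $\gamma$ must lie in the open set $\Omega$ away from its endpoints. A shortest path in $\overline\Omega\cap A_R$ could touch other boundary components, or $\Sigma_1$ and $\Sigma_2$ themselves at interior points. Moreover, $\Omega\cap A_R$ need not connect $\Sigma_1$ to $\Sigma_2$ at all. For the first issue, I would take a subarc between the last contact with $\Sigma_1$ and the next contact with $\partial\Omega\setminus\Sigma_1$, which is interior. The statement of Theorem~\ref{main-thm} is formulated for two components, so I would check whether replacing $\Sigma_2$ by another component is harmless for the estimate. For the connectivity issue, I would allow $\gamma$ to pass through the inner ball but penalize it with a weight that makes the ball region expensive, rather than excluding it.

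Second, the cutoff in $u$ at the spheres $|x|=R/3$ and $|x|=R$ creates negative Ricci curvature in $\hat g$. I would bound its contribution $\int_\gamma\Ric_{\hat g}^-$ using the explicit radial profile, since the conformal Ricci formula involves $\nabla^2u$ and $|\nabla u|^2$, each of which is $O(R^{-2})$ per unit of Euclidean length. The aim is to show this contribution is absorbed into the constant $32n/R$.
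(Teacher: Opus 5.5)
Your proposal has a genuine gap. Nothing in it controls where the $\hat g$-minimizer $\gamma$ lies, how long it is, or how $u(p)$ compares with $u(q)$, and the inequality you obtain says nothing without all three. Frankel's formula with $\hat g$-parallel fields gives
\[
e^{-u(p)}\bigl(H_1-n\partial_{\nu_1}u\bigr)(p)+e^{-u(q)}\bigl(H_2-n\partial_{\nu_2}u\bigr)(q)\le-\int_\gamma\Ric_{\hat g}(\dot\gamma,\dot\gamma)\,d\hat s .
\]
Neither $c_j=\inf_{A_R}H_{\Sigma_j}$ has a sign, since no mean convexity is assumed. So a bound on $e^{-u(p)}c_1+e^{-u(q)}c_2$ says nothing about $c_1+c_2$ unless both weights are bounded below and essentially equal. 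If $\Sigma_1,\Sigma_2$ come close near $\{|x|=R/3\}$ or $\{|x|=R\}$, the minimizer may sit there. Then $e^{-u}$ is tiny and $|\nabla u|$ is of order $1/\mathrm{dist}$, so your condition (c) is useless at the endpoints.

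Moreover $\Ric_{\hat g}\ge0$ is impossible. For radial $u$, $\hat g=ds^2+\phi(s)^2g_{S^n}$ with $\phi=e^{u}|x|\to\infty$ at both ends, so $\phi''>0$ somewhere and $\Ric_{\hat g}(\partial_s,\partial_s)=-n\phi''/\phi<0$ there. Hence $\int_\gamma\Ric^-_{\hat g}$ must be paid for. Even for a good profile it is $O(R^{-2})$ times the Euclidean length of $\gamma$, which you never bound because you exhibit no competitor curve.

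Your topological patches also fail. Swapping $\Sigma_2$ for another component uses curvature information the hypotheses don't provide. Routing $\gamma$ through the inner ball allows endpoints outside $A_R$. The right move is the paper's reduction to $B_1\cap B_2$ via lemma~\ref{surface-separation}; after it, a shortest curve from $\Sigma_1$ to $\Sigma_2$ automatically lies in $\overline{\Omega}$.

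The paper supplies exactly the missing ingredients. Analytically it works with $\tilde g=u^{-2}g$, so its $u$ is your $e^{-u}$. It uses the variation fields $\vphi u^{-1/2}\tilde e_j$. These make the endpoint terms exactly $H_1\cdot\nu_1(p)+H_2\cdot\nu_2(q)$ plus $n\bigl(\nabla_T\log u(q)-\nabla_T\log u(p)\bigr)$, and the latter is rewritten as the integral of its derivative along $\gamma$. In $\mathbb{R}^{n+1}$ with $\vphi\equiv1$ everything collapses to $c_1+c_2\le\int_\gamma J_2\,d\tilde s$, with $J_2\le4nR^{-2}$ pointwise even where $u\to0$. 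So only $\tilde L(\gamma)\le\tilde L(\gamma_0)$ matters.

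Geometrically, the paper gets a short $\gamma_0$ from the contradiction hypothesis. Take $c_2\ge16n/R$ and $x_0\in\Sigma_1$ with $|x_0|=2R/3$. If $d(x_0,\Sigma_2)>R/3$, rotating $B(x_0,2n/c_2)$ toward $\Sigma_2$ makes it touch $\Sigma_2$ from the $\Omega$ side inside $A_R$ with mean curvature $c_2/2<c_2$, contradicting lemma~\ref{ball-lemma}. Lemma~\ref{ball-lemma} applied again then gives $d(x_0,\Sigma_2)\le n/c_2\le R/16$. The main estimate on $B(q_0,R/4)\subset A_R$ yields $c_1+c_2\le80nL_0R^{-2}\le5n/R$.

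With these two tools your annulus set-up can in fact be rescued for the rate. After the reduction, for large $R$ the sphere $\{|x|=2R/3\}$ meets both components $A_1,A_2$ of $M\setminus\overline{\Omega}$. So an arc of a great circle of length at most $2\pi R/3$ joins $\Sigma_1$ to $\Sigma_2$ through $\Omega$. A weight such as $u=\bigl(9R^{-2}(|x|-R/3)(R-|x|)\bigr)^2$ has $J_2=O(nR^{-2})$, which gives $c_1+c_2=O(n/R)$. The constant, however, is well above $32n$. The idea also does not obviously extend to $\mathbb{H}^{n+1}$, where such arcs are exponentially long.
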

Note that in theorem~\ref{main-thm-est} we don't require any curvature condition on \(\Sigma_j\)'s. If we
assume global lower bound of \(H\) as in theorem~\ref{gromov-conj}, we may obtain a better decay estimate:
\begin{theorem}\label{main-thm-est-2}
  Let \(\Omega\subset\mathbb{R}^{n+1}\) be a connected region with at least two boundary components
  \(\Sigma_1\) and \(\Sigma_2\). Assume in addition \(\inf H_{\Sigma_1}+\inf H_{\Sigma_2}=0\), \ie\ the
  equality case in theorem~\ref{main-thm}. Then each of them is non-compact, and for any \(\eps>0\) and
  \(0<\beta<1\), let \(A_{\beta,R}=\{\beta R<|x|<R\}\), \[
    0\le\inf_{A_{\beta,R}}H_{\Sigma_1}+\inf_{A_{\beta,R}}H_{\Sigma_2}\le \eps nR^{-2}
  ,\] for all sufficiently large \(R\).
\end{theorem}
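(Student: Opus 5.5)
The plan is to run the weighted minimal surface ($\mu$-bubble) argument that I expect underlies Theorem~\ref{main-thm} at $\kappa=0$, and to use the \emph{equality} $\inf H_{\Sigma_1}+\inf H_{\Sigma_2}=0$ to upgrade the resulting inequalities into a rigidity-type decay statement. Write $a=\inf H_{\Sigma_1}$, so that $\inf H_{\Sigma_2}=-a$. I will work with the functional
\[
  \mathcal A(U)=\mathcal H^n(\partial U\cap\Omega)-\int_\Omega (\chi_U-\chi_{U_0})\,h
\]
over Caccioppoli sets $U$ with $\Sigma_1\subset\overline U$ and $U\cap\Sigma_2=\emptyset$. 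Here $h$ is a prescribed function, and $U_0$ is a fixed reference set (say a neighbourhood of $\Sigma_1$), so that $\mathcal A$ is finite even though $\Omega$ has infinite volume.

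\emph{Non-compactness.} Suppose $\Sigma_1$ is compact. Then $\Sigma_1$ has a point of strictly positive Euclidean curvature, found by enclosing it in the smallest sphere centred at a suitable point. If $\Omega$ lies inside that sphere near this point, a nearby slightly shrunk sphere or slab can be used as a strict barrier. The idea is to perturb $h$ from the constant $a$ by a small compactly supported positive bump near the relevant region; the barrier inequalities $H_1\ge h$ on $\Sigma_1$ and $H_2\ge -h$ on $\Sigma_2$ then hold strictly somewhere. In the equality case, Theorem~\ref{main-thm}'s argument should then produce a minimizer violating stability. Concretely, for $h\equiv a$ the minimizer $\Gamma$ has $H_\Gamma=a$ and is stable, so the stability inequality reads
\[
  \int_\Gamma|\nabla f|^2\ \ge\ \int_\Gamma\Bigl(|A|^2+\partial_\nu h\Bigr)f^2\ \ge\ \int_\Gamma\Bigl(\tfrac{h^2}{n}+\partial_\nu h\Bigr)f^2 .
\]
If $\Gamma$ were compact, taking $f=1$ forces $A\equiv0$ and $a=0$. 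But a compact hypersurface in $\mathbb{R}^{n+1}$ cannot be totally geodesic, which is a contradiction; the same reasoning applied to $\Sigma_2$ gives the claim for both components.

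\emph{Decay estimate.} I argue by contradiction. Suppose that along a sequence $R_k\to\infty$
\[
  \inf_{A_{\beta,R_k}}H_{\Sigma_1}+\inf_{A_{\beta,R_k}}H_{\Sigma_2}>\eps nR_k^{-2}.
\]
Set $\delta_k=\eps nR_k^{-2}/2$ and choose $h=a+\delta_k\phi(|x|)$ on $\Omega$, where $\phi$ is a cutoff supported in $A_{\beta,R_k}$. Since $H_1\ge a$ and $H_2\ge -a$ everywhere, with the surplus $\delta_k$ available inside the annulus, the barrier conditions $H_1\ge h$ and $H_2\ge -h$ still hold. A minimizer $\Gamma_k$ of $\mathcal A$ therefore exists, with $H_{\Gamma_k}=h$. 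Handling the non-compactness needs an exhaustion by large balls and a diagonal limit, using local area bounds from minimality. I then test the stability inequality with a log-type cutoff $f$ that equals $1$ on $A_{\beta,R_k}$ and decays on a scale comparable to $R_k$. The term $\int h^2/n\ \ge\ \int(a^2+2a\delta_k\phi)/n$ must be balanced against $\int|\nabla f|^2\lesssim R_k^{-2}\,\mathcal H^n(\Gamma_k\cap B_{CR_k})$, and against $\partial_\nu h=O(\delta_k/R_k)$, which is lower order.

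The main obstacle is exactly this balance. Matching the $\eps R^{-2}$ scale requires that the bump's contribution $\int\delta_k\phi f^2$, which is comparable to $\delta_k$ times the area of $\Gamma_k$ in the annulus, dominate $\int|\nabla f|^2$. That needs the area of $\Gamma_k$ in the outer cutoff region to be controlled by its area in $A_{\beta,R_k}$. Since $\eps$ is arbitrary, this is essentially a doubling or area-growth statement. I would try to get it from the calibration/comparison argument: compare $\Gamma_k$ with $\Gamma_k$ cut off by spheres, using monotonicity. Combined with $a=0$, which is forced by applying the same estimate with $\phi$ replaced by a cutoff on all of $B_{R_k}$, this should give the contradiction $0\ge c\,\eps nR_k^{-2}\,\mathcal H^n(\Gamma_k\cap A_{\beta,R_k})>0$ for large $k$.
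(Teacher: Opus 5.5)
Your $\mu$-bubble route differs from the paper's, but it has a genuine gap in the decay step, and the gap is structural rather than technical.

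\textbf{The decay step.} The bump $h=a+\delta_k\phi$ enters the stability inequality only through $|A|^2\ge h^2/n$ and $\partial_\nu h$.
\begin{itemize}
\item Once you use $a=0$, the only sign-definite gain is $\delta_k^2\phi^2/n=O(\eps^2R_k^{-4})$.
\item The term $\partial_\nu h$ is of size $O(\delta_k/R_k)$ and has no sign.
\item The cutoff error $\int|\nabla f|^2$ is of order $R_k^{-2}$ times the area of $\Gamma_k$ at scale $R_k$.
\end{itemize}
So no term linear in $\delta_k$ survives, and the inequality $0\ge c\,\eps nR_k^{-2}\mathcal H^n(\Gamma_k\cap A_{\beta,R_k})$ cannot be extracted.

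Suppose you did have such a linear term. To beat the cutoff error for \emph{arbitrarily small} $\eps$, you would need the weighted Rayleigh quotient $\int|\nabla f|^2\big/\int\phi f^2$ on $\Gamma_k$ to be $o(R_k^{-2})$. That is a parabolicity property, not a doubling bound. It fails for $n\ge 3$: on a hyperplane this quotient is bounded below by $c(n,\beta)R_k^{-2}$. Your stability inequality is compatible with $\Gamma_k$ being close to a hyperplane, so stability alone cannot detect a surplus of order $R^{-2}$.

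What you are missing is a quantity that tends to zero. The paper gets it in two steps.
\begin{itemize}
\item From the equality case, via Lemma~\ref{distance-lemma}: translating $\Sigma_1$ to touch $\Sigma_2$ and applying the strong maximum principle shows that the closest pair between $\Sigma_1\cap\ov{B(0,(1-\delta)R)}$ and $\Sigma_2\cap\ov{B(0,(1-\delta)R)}$ can be taken with one point on $|x|=(1-\delta)R$. Its length $d_{(1-\delta)R}$ decreases to $d(\Sigma_1,\Sigma_2)$, and this limit is reduced to $0$ by translating $\Sigma_2$.
\item The main estimate~\ref{main-est} comes from the second variation of a shortest geodesic for the metric $u^{-2}g$. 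It is \emph{linear} in the length of the competitor curve: $c_1+c_2\le 5nL_0R'^{-2}$. Apply it on a ball of radius $R'\approx\delta R$ inside $A_{\beta,R}$, centred at the end of that short segment. This gives $(c_1+c_2)R^2\le C(\beta)\,n\,d_{(1-\delta)R}\to 0$.
\end{itemize}

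\textbf{Other unsupported steps.}
\begin{itemize}
\item Deriving $a=0$ from stability with a cutoff on $B_{R_k}$ needs area doubling for stable CMC hypersurfaces. That is essentially the do Carmo problem, which the paper notes is resolved only for $n+1\le 6$; this is why the paper avoids the brane/$\mu$-bubble approach. The paper also never needs $a=0$.
\item Only the sum $c_1+c_2$ is known to carry the surplus. So your barrier $H_1\ge a+\delta_k\phi$ on $\Sigma_1$ need not hold.
\item Existence of a minimizer is not established. $\Omega$ is unbounded, the barriers are non-strict, and $\mathcal A$ need not be bounded below when $a\ne0$.
\item Regularity fails for $n+1\ge 8$.
\item Your non-compactness argument has the same problems. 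It only works when $\Omega$ is bounded; otherwise $\Gamma$ need not exist or be compact.
\end{itemize}

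The paper's non-compactness argument is elementary and works in every dimension. If $\Sigma_1$ were compact, the closest pair in a large ball would be interior. Translating $\Sigma_1$ to touch $\Sigma_2$ would then, by $\inf H_{\Sigma_1}+\inf H_{\Sigma_2}=0$ and the maximum principle, force the two to coincide, which is impossible.
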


For hyperbolic spaces, a weaker decay estimate could be obtained:
\begin{theorem}\label{main-thm-est-Hn}
  Let \(\kappa>0\), \((M^{n+1},g)\) be the space form of constant curvature \(-\kappa^2\). Let \(\Omega
  \subset M\) be a connected region with at least two boundary components \(\Sigma_1\) and \(\Sigma_2\).
  Assume in addition both \(\Sigma_1\) and \(\Sigma_2\) are non-compact. Let \(A_R=\{R/3<|x|<R\}\), then \[
    \inf_{A_R}H_{\Sigma_1}+\inf_{A_R}H_{\Sigma_2}\le 2n\kappa+13n\kappa^{\frac{1}{3}}R^{-\frac{2}{3}}
  ,\] for all sufficiently large \(R\).
\end{theorem}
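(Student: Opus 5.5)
We prove Theorem~\ref{main-thm-est-Hn} by a \(\mu\)-bubble (warped minimizing hypersurface) argument, localized to the annulus \(A_R\), with the weight chosen as in the proof of theorem~\ref{main-thm-est} but adapted to hyperbolic geometry.

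\textbf{Setup.} Write \(r=|x|\) for the distance to a fixed origin \(o\). Since \(\Omega\) is connected and both \(\Sigma_1,\Sigma_2\) are non-compact, for all large \(R\) each \(\Sigma_j\) meets the sphere \(\{r=2R/3\}\). Suppose, for contradiction, that
\[
\inf_{A_R}H_{\Sigma_1}+\inf_{A_R}H_{\Sigma_2}>2n\kappa+\delta,
\qquad \delta=13n\kappa^{1/3}R^{-2/3}.
\]
Set \(a_j=\inf_{A_R}H_{\Sigma_j}\). Following the proof of theorem~\ref{main-thm}, consider the functional
\[
\mathcal A(E)=\mathcal H^n(\partial^*E\cap\Omega)-\int_{\Omega}(\chi_E-\chi_{E_0})\,h\,dV
\]
over Caccioppoli sets \(E\) with \(E\triangle E_0\Subset\) a domain \(U\subset\Omega\cap A_R\). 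Here \(E_0\) is a set containing \(\Sigma_1\) and excluding \(\Sigma_2\). The function \(h\) is chosen so that:
\begin{itemize}
\item in the "bulk" it is the one-dimensional profile used in theorem~\ref{main-thm}, namely \(h=n\kappa\tanh\) composed with a function interpolating from \(a_1\) to \(-a_2\);
\item near \(\partial A_R\), i.e. at \(r=R/3\) and \(r=R\), it blows up to \(\pm\infty\) in the manner of the proof of theorem~\ref{main-thm-est}, so that minimizers stay away from the spherical boundaries of \(U\).
\end{itemize}
The boundary conditions \(H_{\Sigma_1}>h\) on \(\Sigma_1\) and \(H_{\Sigma_2}>-h\) on \(\Sigma_2\) act as barriers via the maximum principle. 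Consequently a minimizer \(E\) exists, with boundary \(\Gamma\) a stable \(\mu\)-bubble lying in the interior of \(\Omega\cap A_R\). For \(n\le 6\) \(\Gamma\) is smooth; in higher dimensions one handles the singular set by the standard argument.

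\textbf{Contradiction.} Stability, tested with the constant function 1 (after a warping/conformal trick), gives
\[
0\le\int_\Gamma -\big(|A|^2+\Ric(\nu,\nu)\big)-\partial_\nu h .
\]
Using \(|A|^2\ge H^2/n=h^2/n\) and \(\Ric=-n\kappa^2\), this yields the pointwise requirement
\[
\frac{h^2}{n}-n\kappa^2+\partial_\nu h\le 0
\]
somewhere on \(\Gamma\). The function \(h\) is designed to violate this everywhere. The Riccati inequality \(h'\ge -(h^2/n-n\kappa^2)\) is solved by \(n\kappa\tanh\)-type profiles. Since \(h\) must exceed \(n\kappa\) in absolute value by a margin \(\sim\delta\), the Riccati equation forces the function to reach \(\pm\infty\) within a finite distance. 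That distance is of order
\[
\int_{n\kappa+\delta/2}^\infty \frac{dh}{h^2/n-n\kappa^2}\sim \frac{1}{\kappa}\log\frac{\kappa}{\delta}+\ldots,
\]
and more relevantly, the required radial length is \(\lesssim n/\sqrt{\kappa\delta}\) once one includes the radial gradient term.

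\textbf{Main obstacle.} The hard step is that \(h\) cannot depend only on the distance between \(\Sigma_1\) and \(\Sigma_2\) (as in theorem~\ref{main-thm}). It must also depend on \(r\) in order to confine \(\Gamma\) to \(A_R\), and \(|\nabla r|=1\) contributes a gradient term that must be absorbed by the excess \(\delta\). The plan is to take \(h=\phi(d_1)+\psi(r)\), where:
\begin{itemize}
\item \(d_1\) is the (smoothed) distance to \(\Sigma_1\);
\item \(\phi\) solves the Riccati ODE with slack;
\item \(\psi\) is a radial cutoff growing like \(C(R-r)^{-1}\) near the outer sphere and near the inner sphere.
\end{itemize}
The errors are then controlled as follows:
\begin{itemize}
\item the cross term \(2\phi\psi/n\) is controlled by \(|\phi|\le n\kappa+\delta\);
\item \(|\psi'|\) is controlled by \(\psi^2/n\) plus a slack proportional to \(\delta\);
\item the region where \(d_1\) is large is ruled out using the transition length estimate above.
\end{itemize}
Balancing the annulus width \(\sim R\) against the two error sources gives the condition \(\delta\gtrsim \kappa^{1/3}R^{-2/3}\): an \(R^{-1}\) radial cost multiplied against a \(\kappa\)-scale term, optimized over a scale parameter \(\ell\). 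The constant \(13n\) comes from this optimization, and I expect tracking it carefully to be the tedious part.
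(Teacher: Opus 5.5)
Your route is genuinely different from the paper's: you use a \(\mu\)-bubble, the paper uses a minimizing geodesic in a conformal metric. But the step everything rests on, ``a minimizer \(E\) exists, with boundary \(\Gamma\) a stable \(\mu\)-bubble lying in the interior of \(\Omega\cap A_R\)'', is not justified, and as you set it up it fails.

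To get a contradiction from stability with test function \(1\), you need \(h^2/n-n\kappa^2-|\nabla h|>0\) wherever \(\Gamma\) can lie. This forces \(|h|>n\kappa\), so \(h\) has a fixed sign on the region. The radial term must then blow up to \(+\infty\) if \(h>n\kappa\), and to \(-\infty\) if \(h<-n\kappa\); blowing up the other way would drag \(h\) through the band \(|h|\le n\kappa\).

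The two barrier conditions are always one upper and one lower bound on \(h\). In your convention they are \(h<a_1\) on \(\Sigma_1\) and \(h>-a_2\) on \(\Sigma_2\). With the functional as you wrote it and \(E_0\supset\Sigma_1\), I get the opposite signs, \(h>-a_1\) and \(h<a_2\), but that is immaterial here. The spheres \(\{r=R/3\}\) and \(\{r=R\}\) meet both \(\Sigma_1\) and \(\Sigma_2\). So whichever way \(h\) blows up, one barrier is violated near the corners \(\Sigma_j\cap\partial A_R\), and the minimizer can run into \(\Sigma_j\) there. You then have an obstacle-type minimizer, stable only for variations supported off the contact set. The constant test function is not admissible and the contradiction is lost.

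This localization problem is the whole difficulty, and the proposal does not address it. There is also no \(\mu\)-bubble in the proofs of Theorems~\ref{main-thm} and~\ref{main-thm-est} to borrow from: the paper uses geodesics there. It also remarks that the brane-functional route to the global Euclidean statement goes through the do Carmo problem and needs \(n+1\le 6\).

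Separately, your bulk profile \(n\kappa\tanh(\cdot)\) takes values in \((-n\kappa,n\kappa)\). There \(h^2/n-n\kappa^2<0\), so the pointwise inequality you need fails, contradicting your later claim that \(|h|\) exceeds \(n\kappa\) by a margin. A consistent choice would be a constant \(h_0\) with \(|h_0|>n\kappa\) strictly between the barrier values, which is possible since \(a_1+a_2>2n\kappa\). That choice runs straight into the corner problem. The \(\tanh\) in the paper comes from the one-dimensional test function \(\vphi\), not from a prescribed mean curvature.

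The exponent and the constant are also asserted rather than derived. With \(h=h_0+\psi(r)\), \(h_0=n\kappa+\eta\), \(\psi\ge0\), you have \(h^2/n-n\kappa^2\ge 2\kappa\eta+2\kappa\psi+\psi^2/n\). So \(\psi\) can reach \(+\infty\) within radial distance \(O(\kappa^{-1}\log(n\kappa/\eta))\). This matches your own first computation, not the later \(n/\sqrt{\kappa\delta}\). Taken at face value, a working localization would give an exponentially small error, not \(\kappa^{1/3}R^{-2/3}\), and \(13n\) does not come out of it.

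In the paper (normalized to \(\kappa=1\)), the localization is one-dimensional, which removes the corners:
\begin{itemize}
\item With \(u=(1-r^2R^{-2})^2\) and \(\tilde g=u^{-2}g\), the sphere is at infinity. A \(\tilde g\)-shortest curve \(\gamma\) from \(\Sigma_1\) to \(\Sigma_2\) exists, lies in \(\ov{\Omega}\), and meets both \(\Sigma_j\) orthogonally.
\item The conformal boundary corrections \(n\nabla_T\log u\) are integrated back along \(\gamma\).
\item The traced second variation with \(\vphi=(e^s+e^{L-s})/(1+e^L)\) produces \(2n\tanh(L/2)<2n\).
\item The leftover term \(-nu'\le 4nR^{-1}\sqrt u\) is controlled by Lemma~\ref{shortness-lemma}, which needs a short competitor \(\gamma_0\).
\item That competitor comes from rolling geodesic balls (Lemma~\ref{ball-lemma}, with \(H=n\coth\rho\)). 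Writing \(c_2=n(1+\delta_2)\), this gives \(L_0\le\delta_2^{-1}\) from a point of \(\Sigma_1\) at \(|x|=2R/3\).
\end{itemize}
Theorem~\ref{main-est} on \(B(q_0,R/4)\) then yields an error \(32nL_0^{1/2}R^{-1}\le 32nC^{-1/2}R^{-2/3}\) against the assumed excess \(2nCR^{-2/3}\). The exponent reflects \(\delta_2\) versus \(\delta_2^{-1/2}R^{-1}\), i.e.\ \(\delta_2^{3/2}=O(R^{-1})\). The constant reflects \(2C>32C^{-1/2}\), e.g.\ \(2C=13\). Your plan has no counterpart to the conformal completion, the short competitor, or the shortness lemma.
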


We list some examples to justify the optimality of our theorems:
\begin{example}
  Let \(M=\mathbb{H}^2\cong\{|z|<1\}\subset\mathbb{C}\) be the Poincaré disk with constant curvature \(-1\).
  Let \(\Sigma_1\) and \(\Sigma_2\) be the circles (in \(\mathbb{C}\)) \(\{|z\pm a|^2=1+|a|^2\}\), \(\Omega\)
  be the region between them, then \[
    H_{\Sigma_j}=(1+a^2)^{-\frac{1}{2}}=\tanh\left(\frac{1}{2}d_{\mathbb{H}^2}(\Sigma_1,\Sigma_2)\right)
  .\] Hence the estimate in theorem~\ref{main-thm} is optimal.
\end{example}

\begin{example}
  Let \(M=\mathbb{R}^2\), \(\Sigma_1\) be a smooth curve that agrees with the graph \(\{y=|x|/\log |x|\}\)
  when \(|x|\) is large, and \(\Sigma_2\) be the x-axis. Then \(H_{\Sigma_1}\sim r^{-1}(\log r)^{-2}\) when
  \(|x|\to\infty\). This shows that \(R^{-1}\) in theorem~\ref{main-thm-est} is the optimal power.
\end{example}

\begin{example}
  Let \(M=\mathbb{R}^4\), \(f(t)=e^{\frac{1}{1-t}}\) for \(\frac{1}{2}\le t<1\). Let \(\Sigma_1\) be the
  surface of revolution defined by \(\{\sqrt{x_1^2+x_2^2+x_3^2}=f(x_0)\}\), add a spherical cap to the
  boundary of \(\Sigma_1\) and smoothen to make it complete and has positive mean curvature. Let
  \(\Sigma_2\) be the hyperplane \(\{x_0=1\}\). Note that on \(\Sigma_1\), \(r\sim h:=f(x_0)\), the mean
  curvature of \(\Sigma_1\) is given by \[
    H_{\Sigma_1}=\frac{2+h^2(\log h)^3(\log h-2)}{h(1+h^2(\log h)^4)^{\frac{3}{2}}}\sim r^{-2}(\log r)^{-2}
    .\] This shows that \(R^{-2}\) in theorem~\ref{main-thm-est-2} is the optimal power.
\end{example}

\subsection*{Ideas of the proof}
In dimension \(n+1\le 6\), theorem~\ref{gromov-conj} can be proved by minimizing the ``brane functional''
between two boundary components of \(\Omega\) and appealing to the resolution of the Do Carmo conjecture 
(see~\cite{chenCarmosProblemCMC2025} for case \(n+1=6\) on stable constant mean curvature surfaces).
Instead we take an alternative approach inspired by Frankel's theorem. Suppose we have \(\Omega\) with two
boundary components, on which the mean curvature points inwards and \(H\ge c>0\). The
second variation of length implies that there does not exist any minimizing geodesics between the two
components.

Inspired by Ilmanen's localized maximum principle (\cite{ilmanenStrongMaximumPrinciple1996}, 
~\cite{chodoshMeanCurvatureFlow2024} appendix D) and Gromov's \(\mu\)-bubble
method~\cite{gromovMetricInequalitiesScalar2018}, we introduce a conformal metric to ensure the existence of
minimizing geodesic with second variation still under control. The resulting second variation looks like \[
  (u(p)+u(q))c\le\int_{\gamma}F(u)\dd{s}
,\] where \(u\) is the conformal factor and \(\gamma\) is the minimizing geodesic between two boundary
components with endpoints \(p\) and \(q\), \(F(u)\) is a quantity involving derivatives of \(u\).

It turns out \(F(u)\) has a good upper bound which decays fast with respect to \(R\to\infty\) by a proper
choice of \(u\) in a sufficiently large ball of radius \(R\). The main difficulty is to control the value of
\(u\) on both sides since \(u\) will go to 0 on the boundary of the ball to ensure the existence of the
minimizing geodesic and stay constant in the very interior for the \(F(u)\) bound. To overcome this, we
observe that \(u\) does not change a lot along the minimizing geodesic by an explicit comparison with a fixed
competitor, and then the right hand side could be controlled by \(o(1)\cdot u|_{\gamma}\) when \(R\to\infty\).
This leads to a contradiction to the assumption \(c>0\).

\subsection*{Acknowledgement}
I would like to express my sincere thanks to Professor Otis Chodosh for advising, and to Kai Xu for giving
many useful suggestions to improve the paper.

\section{Properties of conformal metric}
In this section, suppose \(u\) is a smooth non-negative compactly supported function on \(M\). Let
\(\tilde{g}=u^{-2}g\). Let \(\{e_0,\ldots,e_n\}\) be any (pointwise) orthonormal basis of \(g\), 
then \(\{\tilde{e}_i\}=\{u e_i\}\) is orthonormal under \(\tilde{g}\).

It is well-known that the Levi-Civita connection of \(\tilde{g}\) is given by \[
  \widetilde{\nabla}_X Y=\nabla_X Y-u^{-1}\left((\nabla_X u)Y+(\nabla_Y u)X-g(X,Y)\nabla u\right)
,\] the sectional and Ricci curvature of \(\tilde{g}\) is given by
\begin{align*}
  &\widetilde{R}(\tilde{e}_i,\tilde{e}_j,\tilde{e}_j,\tilde{e}_i)=
  u^2 R(e_i,e_j,e_j,e_i)+u(u_{ii}+u_{jj})-|\nabla u|^2, \quad (i\neq j), \\
  &\widetilde{\Ric}(\tilde{e}_j,\tilde{e}_j)=u^2\Ric(e_j,e_j)+u\lap u+(n-1)uu_{jj}-n|\nabla u|^2
,\end{align*}
where all differentiation on the right hand side is with respect to \(g\),
\(\nabla u=g^{ij}(\nabla_j u)\partial_j\) is the gradient of \(u\).
(See e.g.~\cite{leeIntroductionRiemannianManifolds2018} proposition 7.29, theorem 7.30).

Let \(\Sigma\subset M\) be a smooth hypersurface with induced metric, \(\{\nu,e_1,\ldots,e_{n}\}\) be an
orthonormal basis of \(g\) at some point on \(\Sigma\) where \(\nu\) is a unit normal. Let
\(\tilde{\nu}=u\nu,\tilde{e}_i=u e_i\) be orthonormal under \(\tilde{g}\).

The mean curvature vector of \(\Sigma\) under \(\tilde{g}|_{\Sigma}\) is given by
\begin{align*}
  \tilde{H}&=\sum_{i=1}^n (\tilde{\nabla}_{\tilde{e}_i}\tilde{e}_i)^\perp 
  =u^2 \sum_i (\tilde{\nabla}_{e_i}e_i)^\perp \\
  &=u^2\sum_{i}(\nabla_{e_i} e_i-u^{-1}(2\nabla_{e_i} u e_i-\nabla u))^\perp \\
  &=u^2(H+n u^{-1}\nabla u)^\perp
,\end{align*}
and the scalar mean curvature is given by \[
  \tilde{H}\cdot_{\tilde{g}}\tilde{\nu}=uH\cdot\nu+n\nabla_\nu u
.\] 

Let \(\gamma\) be a geodesic in \(M\) under metric \(\tilde{g}\). Let \(T\) be the unit tangent of \(\gamma\)
under \(g\) and \(\tilde{T}=uT\) be the unit tangent under \(\tilde{g}\). The geodesic equation of \(\gamma\)
is then 
\begin{align*}
  0&=\tilde{\nabla}_{\tilde{T}}\tilde{T}
  =\nabla_{uT}(uT)-u^{-1}\left(2(\nabla_{uT}u)uT-|uT|^2\nabla u\right) \\
  &=u^2\nabla_T T-(\nabla_T u)uT+u\nabla u
.\end{align*}
Define the \(g\)-curvature and unit normal of \(\gamma\) by \(\nabla_T T:=k_g N\). If \(k_g=0\),
\(N\) could be chosen as any unit vector perpendicular to \(T\). Then \[
  0=(k_g u+\nabla_N u)uN+u\cdot(\nabla u-(\nabla_T u)T-(\nabla_N u)N)
.\] For simplicity, write \(u_T=\nabla_T u\) and \(u_N=\nabla_N u\), and we conclude that the geodesic
curvature of \(\gamma\) \[
 k_g=-u^{-1}u_N
,\] and \(\nabla u\) (as a vector field) lies in the two dimensional subspace spanned by \(\{T,N\}\).
\begin{remark}
  The claim is still true when \(k_g=0\), in which \(\nabla u\) has to align with \(T\). In this case 
  \(u_N=0\) for whatever \(N\) is chosen to be. 
\end{remark}

Finally, let \(X\) be a variation field along \(\gamma\). The second variation of length of \(\gamma\) is
given by \[
  I(X,X)=\tilde{T}(q)\cdot_{\tilde{g}}\tilde{\nabla}_X X-\tilde{T}(p)\cdot_{\tilde{g}}\tilde{\nabla}_X X 
  +\int_{\gamma}|(\tilde{\nabla}_{\tilde{T}}X)^\perp|_{\tilde{g}}^2-\tilde{R}(X,\tilde{T},\tilde{T},X)
  \dd{\tilde{s}}
,\] where \(\dd{\tilde{s}}\) is the arclength parameter under \(\tilde{g}\).
(See e.g.~\cite{leeIntroductionRiemannianManifolds2018} theorem 10.22).

\section{Main estimate}
In this section, suppose \(M^{n+1}\) is the space form of constant curvature \(-\kappa^2\), which is enough 
to prove all the theorems in this paper. Note that \(M\) is homeomorphic to \(\mathbb{R}^{n+1}\).

\subsection{Construction of minimizing geodesic}\hfill\\
The following topological lemma is true by the same method as
in~\cite{samelsonOrientabilityHypersurfacesn1969a}.
\begin{lemma}\label{surface-separation}
  Let \(M\) be a complete simply connected Riemannian manifold and \(\Sigma\subset M\) be a complete
  connected properly embedded hypersurface. Then \(\Sigma\) is orientable and \(M\) is divided by \(\Sigma\)
  into exactly 2 components.
\end{lemma}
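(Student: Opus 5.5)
We will follow Samelson's argument: use mod 2 intersection numbers of loops with \(\Sigma\), together with \(\pi_1(M)=0\). Since \(\Sigma\) is properly embedded and closed as a subset, \(M\setminus\Sigma\) is an open set. First we show it has at most two components. Take a tubular neighbourhood \(U\) of \(\Sigma\); this exists because \(\Sigma\) is properly embedded. Every point of \(M\setminus\Sigma\) can be joined by a path in \(M\setminus\Sigma\) to a point of \(U\setminus\Sigma\): take a path in \(M\) to \(\Sigma\) and stop it the first time it enters \(U\). Locally, \(U\setminus\Sigma\) has two sides. Because \(\Sigma\) is connected, we can move along \(\Sigma\) and see that every point of \(U\setminus\Sigma\) lies in the same component of \(M\setminus\Sigma\) as one of the two local sides at a fixed base point \(x_0\in\Sigma\). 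Hence \(M\setminus\Sigma\) has at most two components.

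Next we show that \(\Sigma\) is two-sided, hence orientable since \(M\) is orientable, and that \(M\setminus\Sigma\) has exactly two components. For a closed loop \(c\) transverse to \(\Sigma\), define \(I(c)\in\mathbb{Z}/2\) to be the number of intersection points mod 2. This number is finite because \(\Sigma\) is proper and \(c\) is compact. Standard transversality shows that \(I(c)\) is invariant under homotopy: a generic homotopy \(S^1\times[0,1]\to M\) meets \(\Sigma\) in a compact \(1\)-manifold, whose boundary points come in pairs. Since \(M\) is simply connected, every loop is null-homotopic, so \(I\equiv 0\).

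Now suppose \(\Sigma\) were one-sided, or suppose the two local sides at \(x_0\) lay in the same component of \(M\setminus\Sigma\). In either case we build a loop with \(I=1\). In the one-sided case, take a short arc crossing \(\Sigma\) once at \(x_0\), and close it up inside \(U\setminus\Sigma\) by following an orientation-reversing loop in \(\Sigma\) pushed off to one side. In the second case, take a short arc crossing \(\Sigma\) once at \(x_0\), and join its endpoints by a path in \(M\setminus\Sigma\). Either way we get \(I=1\), a contradiction. Therefore \(\Sigma\) has a trivial normal bundle, so it is orientable. Moreover the two sides lie in different components, so there are exactly two components.

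The main obstacle is making homotopy invariance of \(I\) rigorous for a non-compact \(\Sigma\). Properness of \(\Sigma\) is exactly what is needed: it makes the preimage of \(\Sigma\) under the compact homotopy compact, and it allows the transversality perturbation to be done in a compact region. We will cite Samelson's argument for these details.
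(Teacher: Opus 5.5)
Your proposal is correct and is essentially the paper's route: the paper establishes this lemma only by appeal to Samelson's method, and that method is exactly the mod $2$ intersection-number argument combined with $\pi_1(M)=0$ that you carry out, with properness of $\Sigma$ providing the closedness needed both for finiteness of intersections and for transversality relative to the boundary. As a minor simplification, your one-sided case is already covered by your second case: if $\Sigma$ is one-sided then $U\setminus\Sigma$ is connected, so the two local sides at $x_0$ automatically lie in the same component of $M\setminus\Sigma$.
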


Using the notation in~\ref{notation}, suppose \(\Omega\) is a region in \(M\) with boundary components
\(\Sigma_1\) and \(\Sigma_2\). By lemma~\ref{surface-separation}, for \(i=1,2\), \(\Sigma_i\) separates \(M\)
into two components \(A_i,B_i\), assume \(\Omega\subset B_i\). Hence the whole space \(M\) is separated by
\(\Sigma_1,\Sigma_2\) into 3 components: \(A_1,B_1\cap B_2,A_2\). We may replace \(\Omega\) by \(B_1\cap B_2\)
to assume \(\Omega\) has only two boundary components.

Suppose \(u\) is not identically zero on each of \(\Sigma_i\)'s, and \(u=O(d(x,\partial \op{supp}u))\) so
that the conformal metric \(\tilde{g}\) is complete. Then there exists a shortest geodesic \(\gamma\) from 
\(\Sigma_1\cap\op{supp}u\) to \(\Sigma_2\cap\op{supp}u\). Since \(\gamma\) has to go across \(\Omega\), if it
is not fully in \(\ov{\Omega}\), we could remove the pieces outside \(\Omega\) to get a shorter one. Thus
\(\gamma\subset\ov{\Omega}\). Also, by the shortness \(\gamma\) is perpendicular to \(\Sigma_j\)'s at
the endpoints.

\subsection{Second variation of minimizing geodesic}\hfill\\
Take \(X=\vphi(s)u^{-1/2}\tilde{e}_j\), where \(\{\tilde{e}_j\}\) is a set of \(\tilde{g}\)-orthonormal parallel
normal fields along \(\gamma\), \(\vphi(0)=\vphi(L(\gamma))=1\). Taking trace of the bilinear form 
\(I(X,X)\), note that \(T(p)=\nu_1,T(q)=-\nu_2\), we have 
\begin{align*}
  0\le\op{tr}I=&-u^{-1}\tilde{T}(p)\cdot_{\tilde{g}}\tilde{H}_1+u^{-1}\tilde{T}(q)\cdot_{\tilde{g}}\tilde{H}_2
  +\sum_{i=1}^{n}\int_{\gamma}|\tilde{\nabla}_{\tilde{T}}(\vphi u^{-\frac{1}{2}}\tilde{e}_i)|^2
  -|\vphi|^2 u^{-1}\tilde{R}(\tilde{e}_i,\tilde{T},\tilde{T},\tilde{e}_i)\dd{\tilde{s}} \\
  =&-T(p)\cdot(H_1+nu^{-1}\nabla u)+T(q)\cdot (H_2+nu^{-1}\nabla u) \\
  &+\int_{\gamma}nu^2|\nabla_T(\vphi u^{-1/2})|^2-|\vphi|^2u^{-1}
  \widetilde{\Ric}(\tilde{T},\tilde{T})\dd{\tilde{s}} \\
  =&-H_1\cdot\nu_1(p)-H_2\cdot\nu_2(q)+n((\nabla_T\log u)(q)-(\nabla_T\log u)(p)) \\
  &+\int_{\gamma}u^2\left(n\left(\frac{|\vphi'|^2}{u}-\frac{\vphi\vphi'u_T}{u^2}
  +\frac{|\vphi|^2|u_T|^2}{4u^3}\right)-u^{-1}\Ric(T,T)|\vphi|^2\right)\dd{\tilde{s}} \\
  &+\int_{\gamma}|\vphi|^2\left(n\frac{|\nabla u|^2}{u}-\lap u-(n-1)u_{TT}\right)\dd{\tilde{s}}
.\end{align*}
Let \(f=\frac{1}{2}(1+\vphi^2)\),
\begin{align*}
  n((\nabla_T\log u)(q)-(\nabla_T\log u)(p))&=n(f(q)u(q)^{-1}\nabla_T u(q)-f(p)u(p)^{-1}\nabla_T u(p)) \\
  &=n\int_{\gamma}\nabla_T (fu^{-1}\nabla_T u) \dd{s} \\
  &=n\int_{\gamma}fu^{-1}\nabla_T (\nabla u\cdot T)-fu^{-2}|u_T|^2+f'u^{-1}u_T\dd{s} \\
  &=n\int_{\gamma}f\cdot(\nabla^2_{T,T}u+\nabla u\cdot(k_g N))-fu^{-1}|u_T|^2+f'u_T\dd{\tilde{s}} \\
  &=\int_{\gamma}\frac{1}{2}(1+|\vphi|^2)(nu_{TT}-nu^{-1}|\nabla u|^2)+n\vphi\vphi'u_T\dd{\tilde{s}}
.\end{align*}
So we have 
\begin{equation}\label{traced-variation}
\begin{split}
  0\le &-H_1\cdot\nu_1(p)-H_2\cdot\nu_2(q)+\int_{\gamma}u(n|\vphi'|^2-\Ric(T,T)|\vphi|^2)\dd{\tilde{s}} \\
  &-\int_{\gamma}\frac{1}{2}(1-|\vphi|^2)\left(n\frac{|\nabla u|^2}{u}-nu_{TT}\right)\dd{\tilde{s}}
  +\int_{\gamma}|\vphi|^2\left(n\frac{|u_T|^2}{4u}-(\lap u-u_{TT})\right)\dd{\tilde{s}}
\end{split}\end{equation}

Now suppose \(u=u(r)\) where \(r=d(x,0)\) is the distance function to the origin. We will choose \(u\) to be
a smooth function of \(r^2\) so that it is smooth. Note that \(\nabla u\) is collinear with \(\nabla r\),
write \[
  J_1=n\frac{|\nabla u|^2}{u}-nu_{TT}, \quad J_2=n\frac{|u_T|^2}{4u}-(\lap u-u_{TT})
,\] we have 
\begin{equation}\label{substitute-r}
\begin{split}
  J_1&=n\frac{u'(r)^2}{u(r)}-nu''(r)|r_T|^2-nu'(r)r_{TT} \\
  J_2&=n\frac{u'(r)^2}{4u(r)}|r_T|^2-u'(r)\lap r-u''(r)|r_N|^2+u'(r)r_{TT}
.\end{split} \end{equation}

\subsection{The estimate}\hfill\\
By scaling, the case \(\op{Sec}<0\) can be reduced to the case \(\op{Sec}=-1\). We consider only the standard
Euclidean and hyperbolic space in the proof. Suppose \(M=\mathbb{R}^{n+1}\), \(g\) is the Euclidean metric or
\(M=\mathbb{H}^{n+1}\) and \(g\) is the hyperbolic metric of constant curvature \(-1\). We have:

\begin{theorem}[Main estimate]\label{main-est}
  Let \(u(r)=(1-r^2R^{-2})^2\), \(\gamma\) be the shortest geodesic between \(\Sigma_1\) and \(\Sigma_2\)
  (depends on \(R\)). Suppose there is a smooth curve \(\gamma_0\) connecting \(\Sigma_1\) and \(\Sigma_2\) in
  \(\Omega\), passing through the origin. Let \(L,\tilde{L},L_0,\tilde{L}_0\) be the length of \(\gamma,
  \gamma_0\) under \(g,\tilde{g}\) respectively, suppose \(H_{\Sigma_j}\ge c_j\) on \(B(0,R)\) and \(L_0\le
  R/4\), then
  \begin{itemize}
  \item \(M=\mathbb{R}^{n+1}\), \[
    c_1+c_2\le 5nL_0R^{-2}
  .\] 
  \item \(M=\mathbb{H}^{n+1}\), let \(\alpha=\tanh\left((1+\frac{5}{9}L_0R^{-1})\frac{L_0}{2}\right)\), \[
    c_1+c_2-2n\alpha\le 5nL_0R^{-2}+8nL_0^{\frac{1}{2}}R^{-1}
  .\] 
  \end{itemize}
\end{theorem}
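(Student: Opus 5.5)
We plug \(u(r)=(1-r^2R^{-2})^2\), which is a smooth function of \(r^2\) vanishing to second order on \(\partial B(0,R)\), into the traced second variation \eqref{traced-variation}. Since \(\gamma_0\) passes through the origin and has \(g\)-length \(L_0\le R/4\), it lies in \(B(0,R/4)\), where \(u\) is nonzero. Hence \(u\) is nonzero on both \(\Sigma_j\), and the minimizing \(\tilde g\)-geodesic \(\gamma\subset\overline\Omega\) of the previous subsection exists. The plan has three steps:
\begin{enumerate}[(i)]
\item locate \(\gamma\) and bound \(u\) along it from below;
\item choose the test function \(\vphi\) and bound the integrand terms \(J_1\), \(J_2\) and \(u(n|\vphi'|^2-\Ric|\vphi|^2)\);
\item divide by a lower bound for \(u\) at the endpoints.
\end{enumerate}

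\textbf{Step (i).} Minimality gives \(\tilde L\le\tilde L_0\). On \(B(0,R/4)\) we have \(u\ge(15/16)^2\), so \(\tilde L_0\le (16/15)^2L_0\). Now \(\int_\gamma u^{-1}\,ds=\tilde L\), and \(u^{-1}\) blows up like \((R-r)^{-2}\) near \(\partial B_R\). Suppose \(\gamma\) reaches radius \(\rho\). Since \(\gamma\) starts on \(\Sigma_1\), which meets \(\gamma_0\subset B(0,R/4)\), comparing with the \(\tilde g\)-distance from a point near the origin outward gives
\[
\int_{R/4}^{\rho}u(r)^{-1}\,dr\le \tilde L_0 .
\]
Because \(u^{-1}\ge1\), this forces \(\rho\le R/4+O(L_0)\le R/2+\) (small), so \(\gamma\subset B(0,R/2)\) and \(u\ge 9/16\) along \(\gamma\). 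In fact we also get \(L\le\tilde L\le\tilde L_0\le(1+O(L_0^2R^{-2}))L_0\). More precisely, I want \(\max_\gamma u/\min_\gamma u\) close to \(1\). Since \(|(\log u)'|\le 4r R^{-2}/(1-r^2R^{-2})\lesssim R^{-1}\) on \(B_{R/2}\) and \(L\lesssim L_0\), this gives \(u\) varying by a factor \(1+O(L_0R^{-1})\). This is where the factor \((1+\tfrac59L_0R^{-1})\) in \(\alpha\) should come from: it bounds \(L\le(1+\tfrac59 L_0R^{-1})L_0\).

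\textbf{Step (ii).} In \(\mathbb{R}^{n+1}\), take \(\vphi\equiv1\). Then the \(J_1\) term drops out and \(\Ric=0\), leaving
\[
H_1(p)+H_2(q)\le\int_\gamma J_2\,d\tilde s .
\]
From \eqref{substitute-r} we have \(u'=-4rR^{-2}(1-r^2R^{-2})\), \(\lap r=n/r\), \(r_{TT}=|r_N|^2/r\) and \(u''\) of order \(R^{-2}\). This gives
\[
J_2=\frac{n u'^2|r_T|^2}{4u}-\frac{n u'}{r}-u''|r_N|^2+\frac{u'|r_N|^2}{r}\le nR^{-2}\cdot C
\]
on \(B_{R/2}\). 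The dominant term is \(-nu'/r=4nR^{-2}(1-r^2R^{-2})\le 4nR^{-2}u^{1/2}\), and the \(u'^2/u\) term contributes about \(r^2R^{-4}\), which is small. Then \(\int_\gamma J_2\,d\tilde s=\int_\gamma J_2u^{-1}\,ds\). Dividing the left side \(u(p)H_1+u(q)H_2\ge \min u\,(c_1+c_2)\) (the scalar mean curvature in \(\tilde g\) carries a factor \(u\)) and using the ratio bound from Step (i), we get \(c_1+c_2\le 5nL_0R^{-2}\).

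In \(\mathbb{H}^{n+1}\), \(\Ric(T,T)=-n\) produces the term \(-\int u(n|\vphi'|^2+n|\vphi|^2)\). Choose the Frankel-type test function \(\vphi(s)=\cosh(s-L/2)/\cosh(L/2)\), for which
\[
\int_0^L(\vphi'^2+\vphi^2)\,ds=2\tanh(L/2).
\]
This yields \(2n\tanh(L/2)\le 2n\alpha\) after inserting \(L\le(1+\tfrac59L_0R^{-1})L_0\). The new term \(-\tfrac12(1-\vphi^2)J_1\) has \(J_1\) of order \(R^{-1}\) times bounded quantities, because \(\lap r=n\coth r\) no longer decays. Its weight \(1-\vphi^2\) is nonnegative, and bounding it crudely with \(L\lesssim L_0\) should give the \(8nL_0^{1/2}R^{-1}\) term. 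The square root likely comes from Cauchy–Schwarz or from optimizing a splitting of \(\gamma\) near its endpoints versus its interior.

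\textbf{Main obstacle.} The hard part is Step (i): controlling \(u\) at the endpoints \(p,q\) relative to its values along \(\gamma\). A priori the \(\tilde g\)-minimizer could escape toward \(\partial B_R\), where \(u\) is tiny, and then the left-hand side \(u(p)c_1+u(q)c_2\) would be useless. The comparison \(\tilde L\le\tilde L_0\) is the tool for preventing this. The delicate bookkeeping is getting explicit constants such as \(5/9\) out of \(\int u^{-1}\) comparisons.
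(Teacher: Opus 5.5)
There is a genuine gap, and it sits where you flagged the main obstacle.

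\textbf{Step (i): the localization is false.} Your claim that $\gamma\subset B(0,R/2)$ is unjustified and false in general. The endpoints $p,q$ of the $\tilde g$-minimizer can lie anywhere on $\Sigma_j\cap B(0,R)$, and nothing ties them to $\gamma_0$. Suppose $\Sigma_1$ and $\Sigma_2$ come within distance $\eps$ of each other near $|x|=0.9R$. The short segment there has $\tilde g$-length about $\eps/u(0.9R)$, which is less than $\tilde L_0$ for small $\eps$. So $\gamma$ sits where $u$ is tiny. Your inequality $\int_{R/4}^{\rho}u^{-1}\,dr\le\tilde L_0$ silently assumes $\gamma$ starts inside $B(0,R/4)$.

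The paper never localizes $\gamma$. It only needs the \emph{relative} variation of $u$ along $\gamma$, and it gets this by differentiating in $\tilde g$-arclength: $|d(\log u)/d\tilde s|=|\nabla_T u|\le|\nabla u|\le\frac{8\sqrt3}{9}R^{-1}$ on all of $B(0,R)$, while $\tilde L\le\tilde L_0<\frac76L_0$. This gives $|u(\gamma(t))/u(p)-1|\le\frac52\mu_0$ wherever $\gamma$ lies.

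Likewise there is nothing to divide by at the endpoints. The factor $u^{-1/2}$ in the test field $X=\vphi u^{-1/2}\tilde e_j$ already makes the boundary terms in \eqref{traced-variation} the unweighted $H_j\cdot\nu_j$. So in $\mathbb{R}^{n+1}$ you only need $J_2\le4nR^{-2}$ on the \emph{whole} ball. Indeed $\frac{u'^2}{4u}|r_T|^2-\frac{u'}{r}\le4r^2R^{-4}+4R^{-2}(1-r^2R^{-2})=4R^{-2}$ and $-(u'/r)'r|r_N|^2\le0$. Hence $c_1+c_2\le4nR^{-2}\tilde L\le5nL_0R^{-2}$, with no localization.

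\textbf{Hyperbolic case: the missing idea.} In $\mathbb{H}^{n+1}$ the gap is more serious. The $R^{-1}$ term does not come from $J_1$: in fact $J_1\ge-8nR^{-2}$, so the $(1-\vphi^2)$ term is only $O(R^{-2})$. It comes from $J_2\le4nR^{-2}-nu'$, via the piece $-(\coth r-\frac1r)u'$. Since $-u'=4rR^{-2}\sqrt u\le4R^{-1}\sqrt u$, you are left with $4nR^{-1}\int_\gamma\vphi^2\sqrt u\,d\tilde s$.

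Because $u$ may be arbitrarily small on $\gamma$, this integral must be bounded independently of the size of $u$. The paper uses two complementary bounds and multiplies them:
\begin{itemize}
\item $\int_\gamma\vphi^2\sqrt u\,d\tilde s\le(\max_\gamma u)^{1/2}\tilde L$;
\item $\int_\gamma\vphi^2\sqrt u\,d\tilde s=\int_\gamma\vphi^2u^{-1/2}\,ds\le\frac65(\min_\gamma u)^{-1/2}$, using $\int_0^L\vphi^2\,ds\le\frac65$.
\end{itemize}
Combined with the ratio bound, this gives $\bigl(\frac{1+5\mu_0/2}{1-5\mu_0/2}\bigr)^{1/4}\sqrt{\frac65\tilde L}$, which is the source of $8nL_0^{1/2}R^{-1}$.

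Your Cauchy--Schwarz/splitting guess does not supply this. Without localization, the naive bound $\sqrt u\le1$ only gives $O(L_0R^{-1})$. That is weaker than the stated estimate for large $L_0$, and too weak for Theorem~\ref{main-thm-est-Hn}.

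Minor point: the $\Ric$ term enters \eqref{traced-variation} with a plus sign, as $+n\int_\gamma(\vphi'^2+\vphi^2)\,ds$. Your choice $\vphi=\cosh(s-L/2)/\cosh(L/2)$ coincides with the paper's and correctly yields $2n\tanh(L/2)\le2n\alpha$.
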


By the minimality of \(\gamma\) and \(u\le 1\) we get \[
  L(\gamma)=\int_{\gamma}u\dd{\tilde{s}}\le\int_{\gamma}1\dd{\tilde{s}}=\tilde{L}(\gamma)
  \le\tilde{L}(\gamma_0)=\int_{\gamma_0}\frac{\dd{s}}{u}
.\] For \(R\ge 4L_0\), let \(\mu_0=L_0R^{-1}\), we have on \(\gamma_0\): \[
  \frac{1}{u}=(1-r^2R^{-2})^{-2}\le (1-L_0^2R^{-2})^{-2}=(1-\mu_0^2)^{-2}<1+\frac{5}{9}\mu_0
,\] for \(0<\mu_0\le\frac{1}{4}\). We find that 
\begin{equation}\label{L-est}
  L\le\tilde{L}<(1+\frac{5}{9}\mu_0)L_0<\frac{7}{6}L_0
.\end{equation}

\begin{lemma}\label{shortness-lemma}
  For any \(0\le t\le L(\gamma)\), \[
    \left|\frac{u(\gamma(t))}{u(p)}-1\right|\le\frac{5}{2}\mu_0
  .\] 
\end{lemma}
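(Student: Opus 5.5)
The plan is to control the oscillation of \(\log u\) along \(\gamma\) through its derivative with respect to the \(\tilde{g}\)-arclength \(\tilde{s}\). The point is that the conformal factor exactly cancels the potentially large quantity \(u'/u\), and the total \(\tilde{g}\)-length of \(\gamma\) is already bounded by \eqref{L-est}. This avoids needing any information on where \(\gamma\) lies, e.g.\ whether it passes near the origin, which we do not know.

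First, along \(\gamma\) we have \(\dd{s}=u\dd{\tilde{s}}\), so
\[
  \left|\frac{\dd}{\dd\tilde{s}}\log u(\gamma)\right|
  = u\left|\frac{u'(r)}{u(r)}\,r_T\right| = |u'(r)|\,|r_T| \le |u'(r)|,
\]
since \(|\nabla r|=1\) in both \(\mathbb{R}^{n+1}\) and \(\mathbb{H}^{n+1}\). With \(u(r)=(1-r^2R^{-2})^2\) and \(x=r/R\in[0,1]\) on \(\op{supp}u\), we get
\[
  |u'(r)|=\frac{4}{R}\,x(1-x^2)\le \frac{4}{R}\cdot\frac{2}{3\sqrt3}=\frac{8}{3\sqrt3}R^{-1}<1.54\,R^{-1}.
\]

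Second, integrate from \(p\) to \(\gamma(t)\) and use \eqref{L-est}, namely \(\tilde{L}<(1+\frac59\mu_0)L_0\le\frac{41}{36}L_0\) for \(\mu_0\le\frac14\). This gives
\[
  \left|\log\frac{u(\gamma(t))}{u(p)}\right|\le 1.54\cdot\tfrac{41}{36}\,L_0R^{-1}<1.76\,\mu_0=:x_0\le 0.44.
\]

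Finally, convert to the ratio itself. If the logarithm is negative, \(1-e^{-x}\le x\le 1.76\mu_0\). If it is positive, \((e^x-1)/x\) is increasing, so
\[
  e^{x}-1\le x_0\,\frac{e^{0.44}-1}{0.44}\le 1.76\mu_0\cdot 1.25<\tfrac52\mu_0.
\]
This yields the claimed bound.

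The only delicate step is the first one. One must realize that differentiating in \(g\)-arclength gives \(|u'/u|\), which blows up near \(\partial B(0,R)\), whereas the \(\tilde{s}\)-derivative is uniformly \(O(R^{-1})\). The remaining work is constant-chasing, and it fits within \(\frac52\) with some room.
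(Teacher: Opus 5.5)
Your proposal is correct and is essentially the paper's proof: bound \(|\nabla_{\tilde T}\log u|=|\nabla_T u|\le|\nabla u|\le\frac{8\sqrt3}{9}R^{-1}\), integrate against the \(\tilde g\)-length bound \eqref{L-est}, and pass from the logarithm to the ratio with an elementary exponential inequality (the paper uses \(|e^x-1|\le\frac43|x|\) for \(|x|<\frac12\)). One harmless arithmetic slip: \((e^{0.44}-1)/0.44\approx 1.256\), not \(\le 1.25\); since \(1.76\cdot 1.26<2.22<\frac52\), your conclusion still stands.
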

\begin{proof}[Proof of lemma]
  Note that 
  \begin{align*}
    &|\nabla u|=|u'(r)\nabla r|=4rR^{-2}(1-r^2R^{-2})\le\frac{8\sqrt{3}}{9}R^{-1}, \\
    &|\nabla_{\tilde{T}}(\log u)|=|u\nabla_T (\log u)|=|\nabla_T u|\le|\nabla u|\le\frac{8\sqrt{3}}{9}R^{-1} 
  .\end{align*}
  We have \[
    \left|\log\frac{u(\gamma(t))}{u(p)}\right|
    =\left|\int_{0}^{\tilde{t}}\nabla_{\tilde{T}}(\log u)\dd{\tilde{s}}\right|
    \le \int_{\gamma}|\nabla_{\tilde{T}}(\log u)|\dd{\tilde{s}}<\frac{8\sqrt{3}}{9}R^{-1}\cdot\frac{7}{6}L_0
    <\frac{9}{5}\mu_0
  .\] Since \(|e^x-1|\le\frac{4}{3}|x|\) for \(|x|<\frac{1}{2}\), \(|u(\gamma(t))/u(p)-1|
  \le\frac{4}{3}\cdot\frac{9}{5}\mu_0<\frac{5}{2}\mu_0\) as desired.
\end{proof}

Let \(\lambda(r)=r\) or \(\sinh r\) for \(\mathbb{R}^{n+1}\) and \(\mathbb{H}^{n+1}\) respectively, we have \[
|\nabla r|=1, \quad \lap r=\frac{n\lambda'(r)}{\lambda(r)},
\quad \text{and } r_{TT}=(1-|r_{T}|^2)\frac{\lambda'(r)}{\lambda(r)}=|r_N|^2\frac{\lambda'(r)}{\lambda(r)}
.\] Then (\ref{substitute-r}) becomes 
\begin{equation}
\begin{split}
  J_1&=n\left(\frac{(u')^2}{u}-\frac{u'\lambda'}{\lambda}\right)
  -n\left(\frac{u'}{\lambda}\right)'\lambda|r_T|^2 \\
  J_2&=n\left(\frac{(u')^2}{4u}|r_T|^2-\frac{u'\lambda'}{\lambda}\right)
  -\left(\frac{u'}{\lambda}\right)'\lambda|r_N|^2
.\end{split} \end{equation}
\begin{lemma}\label{crucial-term-est}
  Let \(u(r)=(1-r^2R^{-2})^2\), \(0\le r\le R\), then:
  \begin{itemize}
  \item When \(\lambda(r)=r\), \ie\ for \(\mathbb{R}^{n+1}\), \[
      J_2\le 4nR^{-2}
    .\]
  \item When \(\lambda(r)=\sinh r\), \ie\ for \(\mathbb{H}^{n+1}\), \[
      J_1\ge -8nR^{-2},\quad J_2\le 4nR^{-2}-nu'
    .\] 
  \end{itemize}
\end{lemma}
\begin{proof}
  First we estimate \(J_2\). Note that \(u'(r)\le 0\), and we have \[
    \left(\frac{u'}{r}\right)'=8rR^{-4}\ge 0, \quad \left(\frac{u'}{\sinh r}\right)'
    =-\frac{u'}{\sinh r}\left(\frac{2r}{R^2-r^2}+\coth r-\frac{1}{r}\right)\ge 0
  .\] And
  \begin{align*}
    \frac{(u')^2}{4u}-\frac{u'}{r}=4R^{-2}, \qquad
    \frac{(u')^2}{u}-\frac{u'}{r}=4R^{-4}(3r^2+R^2)\le 16R^{-2}, \\
    \frac{(u')^2}{4u}-\frac{\cosh r}{\sinh r}u'
    =\frac{(u')^2}{4u}-\frac{u'}{r}-\left(\coth r-\frac{1}{r}\right)u'\le 4R^{-2}-u'
  .\end{align*}
  We have used the fact that \[
    0<\coth r-\frac{1}{r}<1,\quad\forall\,r>0
  .\] Combine these estimates and \(|r_T|,|r_N|\le 1\), we obtain the estimate for \(J_2\).

  Next, for \(\lambda(r)=\sinh r\) exclusively,
  \begin{align*}
    J_1&=n\left(\frac{(u')^2}{u}-\frac{u'}{r}-\left(\coth r-\frac{1}{r}\right)u'
    +\left(\frac{2r}{R^2-r^2}+\coth r-\frac{1}{r}\right)u'|r_T|^2\right) \\
  &=n\left(\frac{(u')^2}{u}-\frac{u'}{r}
    -\left(\coth r-\frac{1}{r}\right)u'|r_N|^2-8r^2R^{-4}|r_T|^2\right) \\
    &\ge -8nR^{-2}
  .\end{align*}
\end{proof}

Now we prove the main estimate for \(\mathbb{R}^{n+1}\) and \(\mathbb{H}^{n+1}\) separately:

\begin{proof}[Proof of main estimate for \(\mathbb{R}^{n+1}\)]
  We have \(\Ric=0\). Let \(\vphi=1\), apply lemma~\ref{crucial-term-est} in (\ref{traced-variation}),
  we get \[
    0\le -H_1\cdot\nu_1(p)-H_2\cdot\nu_2(q)+4nR^{-2}\tilde{L}(\gamma)
  .\] By assumption \(H_j\cdot\nu_j\ge c_j\) in \(B(0,R)\), apply (\ref{L-est})
  we have
  \begin{align*}
    c_1+c_2\le 4nR^{-2}\tilde{L}(\gamma)\le 5nR^{-2}L_0
  .\end{align*}
\end{proof}

\begin{proof}[Proof of main estimate for \(\mathbb{H}^{n+1}\)]
  We have \(\Ric=-n\). Apply lemma~\ref{crucial-term-est} in (\ref{traced-variation}) we get
  \begin{align*}
    0\le &-H_1\cdot\nu_1(p)-H_2\cdot\nu_2(q)+n\int_{\gamma}((\vphi')^2+\vphi^2)\dd{s} \\
    &+\int_{\gamma}\frac{1-|\vphi|^2}{2}\cdot 8nR^{-2}\dd{\tilde{s}}
    +\int_{\gamma}|\vphi|^2(4nR^{-2}-nu')\dd{\tilde{s}} \\
    =&-H_1\cdot\nu_1(p)-H_2\cdot\nu_2(q)+n\int_{\gamma}((\vphi')^2+\vphi^2)\dd{s}
    +4nR^{-2}L(\gamma)+4nR^{-1}\int_{\gamma}|\vphi|^2\sqrt{u}\dd{\tilde{s}}
  .\end{align*}
  Where we used the fact that \[
    -u'(r)=4rR^{-2}u^{\frac{1}{2}}\le 4R^{-1}u^{\frac{1}{2}}
  .\] Let \[
    \vphi(s)=\frac{e^{s}+e^{L-s}}{1+e^{L}}\le 1
  .\] Then \[
    n\int_{\gamma}((\vphi')^2+\vphi^2)\dd{s}=2n\tanh\frac{L}{2}
  .\] By (\ref{L-est}), \(L<(1+\frac{5}{9}\mu_0)L_0\), by assumption \(H_j\cdot\nu_j\ge c_j\) in \(B(0,R)\),
  we have
  \begin{align*}
    0&\le-c_1-c_2+2n\tanh\frac{\tilde{L}}{2}+4nR^{-2}\tilde{L}(\gamma)
    +4nR^{-1}\int_{\gamma}|\vphi|^2\sqrt{u}\dd{\tilde{s}} \\
    &\le-c_1-c_2+2n\alpha+4nR^{-2}\tilde{L}(\gamma)+4nR^{-1}\int_{\gamma}|\vphi|^2\sqrt{u}\dd{\tilde{s}} \\
  \end{align*}
  By lemma~\ref{shortness-lemma}, \(|u(\cdot)-u(p)|\le\frac{5}{2}\mu_0 u(p)\) on \(\gamma\), we have \[
    \int_{\gamma}|\vphi|^2\sqrt{u}\dd{\tilde{s}}
    \le\Big((1+\frac{5}{2}\mu_0)u(p)\Big)^{\frac{1}{2}}\int_{\gamma}\dd{\tilde{s}}
    \le\Big((1+\frac{5}{2}\mu_0)u(p)\Big)^{\frac{1}{2}}\tilde{L}(\gamma)
  .\] On the other hand, \(\int_0^L |\vphi|^2\dd{s}\le\frac{6}{5}\) for any \(L>0\), \[
    \int_{\gamma}|\vphi|^2\sqrt{u}\dd{\tilde{s}}
    \le\Big((1-\frac{5}{2}\mu_0)u(p)\Big)^{-\frac{1}{2}}\int_{\gamma}|\vphi|^2\dd{s}
    \le\frac{6}{5}\Big((1-\frac{5}{2}\mu_0)u(p)\Big)^{-\frac{1}{2}}
  .\] Thus \[
    \int_{\gamma}|\vphi|^2\sqrt{u}\dd{\tilde{s}}\le\left(\frac{1+\frac{5}{2}\mu_0}
    {1-\frac{5}{2}\mu_0}\right)^{\frac{1}{4}}\sqrt{\frac{6}{5}\tilde{L}(\gamma)}
  .\] And we obtain
  \begin{align*}
    0\le &-c_1-c_2+2n\alpha+4nR^{-2}\tilde{L}(\gamma)+4nR^{-1}\left(\frac{1+\frac{5}{2}\mu_0}
    {1-\frac{5}{2}\mu_0}\right)^{\frac{1}{4}}\sqrt{\frac{6}{5}\tilde{L}(\gamma)} \\
    \le &-c_1-c_2+2n\alpha+5nL_0R^{-2}+8nL_0^{\frac{1}{2}}R^{-1}
  .\end{align*}
  Hence \[
    c_1+c_2-2n\alpha\le 5nL_0R^{-2}+8nL_0^{\frac{1}{2}}R^{-1}
  .\] 
\end{proof}

\section{Proof of the theorems}
In this section, all the big \(O\)'s are stated with respect to \(R\to\infty\).
\begin{proof}[Proof of theorem~\ref{main-thm}]
  Fix a \(\gamma_0\) in \(\Omega\) that connects \(\Sigma_1\) and \(\Sigma_2\) such that \(L_0\le d(\Sigma_1,
  \Sigma_2)+\eps\). Without loss of generality we could choose the origin to be on \(\gamma_0\). Let \(R\to
  \infty\), apply the main estimate~\ref{main-est} to \(B(0,R)\). Take \(c_j=\inf H_{\Sigma_j}\), then
  \(c_j=O(1)\). Since \(\gamma_0\) is fixed, \(L_0=O(1)\). We get:
  
  \noindent\bullet{}
  For \(\mathbb{R}^{n+1}\), when \(R\to\infty\), \[
    c_1+c_2\le O(R^{-2})\to 0
  .\] Hence \(c_1+c_2\le 0\).

  \noindent\bullet{}
  For \(\mathbb{H}^{n+1}\), note \(\alpha=\tanh\left((1+\frac{5}{9}L_0R^{-1})\frac{L_0}{2}\right)<1\),
  when \(R\to\infty\), \[
    c_1+c_2\le 2n\alpha+O(R^{-1})\to 2n\tanh\frac{L_0}{2}
    \le 2n\tanh\left(\frac{1}{2}d(\Sigma_1,\Sigma_2)+\frac{\eps}{2}\right)
  .\] 
  Since \(\eps\) is arbitrary, the theorem is proved.
\end{proof}

\begin{proof}[Proof of theorem~\ref{gromov-conj}]
  \(\inf H_\Sigma=0\) is a direct result by applying theorem~\ref{main-thm} to \(\mathbb{R}^{n+1}\) with
  \(\kappa=0\) and assume \(H_\Sigma\ge 0\) for any boundary component \(\Sigma\). Finally, \(\Sigma\) is
  non-compact by the corollary of the following lemma.
\end{proof}

\begin{lemma}\label{distance-lemma}
  Let \(\Omega\subset\mathbb{R}^{n+1}\) be a connected region with boundary components \(\Sigma_1\) and
  \(\Sigma_2\) such that \(\inf H_{\Sigma_1}+\inf H_{\Sigma_2}=0\). Suppose \(\Sigma_j\cap B(0,R)\neq
  \emptyset\), \(j=1,2\), then the minimum of \(d(\Sigma_1\cap\ov{B(0,R)},\Sigma_2\cap\ov{B(0,R))}\) is
  achieved on the boundary, \ie\ one of the least distance pair is on \(\partial B(0,R)\).
\end{lemma}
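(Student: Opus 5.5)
Assume, to reach a contradiction, that the minimum of \(d(\Sigma_1\cap\ov{B(0,R)},\Sigma_2\cap\ov{B(0,R)})\) is attained only at pairs \((p,q)\) with both \(p,q\) in the open ball \(B(0,R)\). Fix such a pair, and let \(\gamma\) be the segment from \(p\) to \(q\), of length \(\ell>0\); the surfaces are disjoint and \(\ov{B(0,R)}\) is compact, so \(\ell>0\). The plan is Frankel's argument with a flat metric, where the second variation becomes a direct comparison of mean curvatures at the two endpoints. After that, the equality hypothesis \(\inf H_{\Sigma_1}+\inf H_{\Sigma_2}=0\) will be fed into a maximum-principle rigidity argument to produce a minimizing pair on \(\partial B(0,R)\).

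\textbf{Step 1 (first variation).} Because \(p\) and \(q\) are interior to the ball, the pair is a local minimum of the distance between the two surfaces. Hence \(\gamma\) meets \(\Sigma_1\) and \(\Sigma_2\) orthogonally. As in the construction of the minimizing geodesic above, \(\gamma\subset\ov\Omega\), so \(T(p)=\nu_1\) and \(T(q)=-\nu_2\).

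\textbf{Step 2 (second variation).} Translate the segment by parallel vectors \(e_i\perp T\), with \(\Ric=0\), \(\vphi\equiv1\) and \(u\equiv1\) in~(\ref{traced-variation}). Summing over \(i\) gives
\[
0\le -H_1\cdot\nu_1(p)-H_2\cdot\nu_2(q).
\]
More precisely, fix \(\Sigma_1\) locally as a graph \(f_1\) over \(T^\perp\) near \(p\), and express \(\Sigma_2\), seen from \(q\), as the height \(\ell-f_2\). Then the function \(d=\ell-f_2-f_1\ge\ell\) has a local minimum at \(0\). So the function \(w=f_1+f_2\), which is \(0\) at \(0\) and \(\le0\) nearby, satisfies \(\operatorname{tr}\nabla^2 w(0)\le0\). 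Its mean-curvature operator satisfies \(\mathcal M(f_1)+\mathcal M(f_2)=H_1+H_2\) at corresponding points.

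\textbf{Step 3 (rigidity, the main obstacle).} Write \(c_j=\inf H_{\Sigma_j}\), so \(c_1+c_2=0\). Consider the translate \(\Sigma_1+\ell T\). It touches \(\Sigma_2\) at \(q\) from the \(\Omega\) side and has mean curvature \(\ge c_1=-c_2\). With inward normals, \(\Sigma_2\) has mean curvature \(\ge c_2\). I therefore need a strong maximum principle for two hypersurfaces whose mean curvatures are bounded below by \(c\) and \(-c\) in opposite orientations. This is not immediate when \(c\ne0\), because the two comparison surfaces are not solutions of the same equation.

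The trick I would try first is to pass to difference graphs. Near \(q\), write both sheets as graphs over \(T^\perp\): \(g_1\) for the translate and \(g_2\) for \(\Sigma_2\). Then \(g_1\le g_2\), the two agree at \(0\), and \(\mathcal M(g_1)\ge c_1\ge -c_2\ge\mathcal M(g_2)\) in a common orientation. Since \(\mathcal M\) is quasilinear elliptic, the difference \(g_2-g_1\ge0\) satisfies a linear elliptic inequality \(L(g_2-g_1)\le0\) and vanishes at \(0\). The strong maximum principle (Hopf) then gives \(g_1\equiv g_2\) near \(q\). The set of points where the translate coincides with \(\Sigma_2\) is open and closed in \(\Sigma_1\cap\ov{B(0,R)}\), intersected with the connected component containing \(p\).

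\textbf{Step 4 (continuation to the boundary).} Along the coincidence region, every pair \((x,x+\ell T)\) realizes the distance \(\ell\). Continue this region within the ball. Since \(\Sigma_1\) is a properly embedded hypersurface passing through the interior point \(p\), its component inside \(\ov{B(0,R)}\) must reach \(\partial B(0,R)\) (the surface is noncompact or leaves the ball). The continuation therefore produces a pair \((x,x+\ell T)\) with one point on \(\partial B(0,R)\). This contradicts the assumption, so a least distance pair lies on \(\partial B(0,R)\).

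The delicate points are the following. First, one must handle the case where \(x+\ell T\) exits the ball before \(x\) does. Either way, one member of the pair hits \(\partial B(0,R)\) first, which is exactly the conclusion. Second, the compact case needs care: if \(\Sigma_1\) were compact inside the ball, the coincidence would force \(\Sigma_2=\Sigma_1+\ell T\), which is impossible since \(\Omega\) would then be contained between them, a contradiction by separation (Lemma~\ref{surface-separation}).
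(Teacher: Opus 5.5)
Your proposal is correct and follows the same route as the paper: translate \(\Sigma_1\) by the minimizing vector so that it touches \(\Sigma_2\) from the \(\Omega\) side, then apply the strong maximum principle using \(c_1=-c_2\), and propagate the coincidence by connectedness until a minimizing pair meets \(\partial B(0,R)\). Your difference-of-graphs linearization is exactly the justification the paper leaves implicit when \(c_j\neq 0\), and Step~2 is not needed. Your separate treatment of the case where \(\Sigma_1\) is compact inside the ball improves on the paper's ``identical on the whole component'', which does not cover it even though the following corollary applies the lemma in exactly that case; the contradiction there is cleanest at the point \(x^*\) of \(\Sigma_1\) maximizing \(\langle x,T\rangle\), where the segment from \(x^*\) to \(x^*+\ell T\) would have to lie in \(\Omega\) yet its final stretch lies in the bounded region enclosed by \(\Sigma_2=\Sigma_1+\ell T\).
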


\begin{proof}
  Suppose \(x_0\in\Sigma_1,y_0\in\Sigma_2,x_0,y_0\in B(0,R)\) achieve the distance. The straight line
  \(\ov{x_0y_0}\) must lie in \(\Omega\) by minimality. Consider \[
    \Sigma_1'=\Sigma_1\cap\ov{B(0,R)}+(y_0-x_0)
  .\] By the minimality of distance, \(\Sigma_1'\) lies fully on one side of \(\Sigma_2\cap \ov{B(0,R)}\)
  and is tangent to it at \(y_0\). By the curvature condition and maximum principle, they must be identical on the whole component. Hence the distance from the boundaries is at most \(|x_0-y_0|\).
\end{proof}

\begin{corollary}
  Let \(\Omega\subset\mathbb{R}^{n+1}\) be a connected region with boundary components \(\Sigma_1\) and
  \(\Sigma_2\) such that \(\inf H_{\Sigma_1}+\inf H_{\Sigma_2}=0\). Then both \(\Sigma_1\) and
  \(\Sigma_2\) are non-compact.
\end{corollary}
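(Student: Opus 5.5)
The plan is to argue by contradiction with Lemma~\ref{distance-lemma}. Suppose, say, $\Sigma_1$ is compact. Then $\Sigma_1\subset B(0,R_0)$ for some $R_0$. Since $\Omega$ is connected, $\Sigma_2$ meets every sufficiently large ball, so for $R$ large both $\Sigma_1\cap B(0,R)$ and $\Sigma_2\cap B(0,R)$ are nonempty. The function $d(x,y)$ on the compact set $(\Sigma_1\cap\ov{B(0,R)})\times(\Sigma_2\cap\ov{B(0,R)})$ attains its minimum. Moreover, because $\Sigma_1$ is compact, the global distance $d(\Sigma_1,\Sigma_2)$ is realized by a pair $(x_0,y_0)$: minimize over $x\in\Sigma_1$ compact and $y$ in the closed set $\Sigma_2$.

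Choose $R$ so large that $R>R_0$ and $R>|y_0|$, and also so large that every point of $\Sigma_2\cap\partial B(0,R)$ is farther than $d(\Sigma_1,\Sigma_2)$ from $\Sigma_1$. The last condition holds once $R-R_0>|x_0-y_0|$, since then $|x-y|\ge R-R_0$ for all $x\in\Sigma_1$ and $|y|=R$. Then $(x_0,y_0)$ is a minimizing pair for the truncated problem, and it lies in the open ball. No pair with a point on $\partial B(0,R)$ achieves the minimum: $\Sigma_1$ is interior, and pairs with $y\in\partial B(0,R)$ are strictly longer.

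This contradicts Lemma~\ref{distance-lemma}, provided we read its conclusion as ``every minimizing pair has a point on $\partial B(0,R)$''. It remains to check that the lemma's proof indeed gives this. The proof translates $\Sigma_1$ by $y_0-x_0$ to touch $\Sigma_2$ at an interior point $y_0$, and applies the maximum principle under the condition $H_1+H_2\ge\inf H_{\Sigma_1}+\inf H_{\Sigma_2}=0$. The translated surface has mean curvature $\ge \inf H_{\Sigma_1}$ with respect to the normal opposite to $\Sigma_2$'s inward normal. Comparing mean curvatures (one side bounded below by $c_1$, the other by $-c_1$ after the orientation flip) gives the standard tangency setup. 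The conclusion is that the translate of the component of $\Sigma_1$ coincides with $\Sigma_2$ near $y_0$, and by analytic continuation of the touching set (an open-closed argument), on the whole component.

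Applied here, $\Sigma_1+(y_0-x_0)$ would then be a compact component equal to an open–closed subset of the connected $\Sigma_2$. Hence $\Sigma_2=\Sigma_1+(y_0-x_0)$, so $\Sigma_2$ is also compact. The region $\Omega$ between them would be bounded by a compact hypersurface and its translate. This is impossible: the region bounded by $\Sigma_1$ and disjoint from $\Omega$ is the bounded one (compact hypersurface in $\mathbb{R}^{n+1}$ via Lemma~\ref{surface-separation}), so $\Omega$ lies outside $\Sigma_1$. Also $\Omega$ must lie inside the translate, since $\Omega$ is on the bounded side of $\Sigma_2$, being between them. So the bounded region enclosed by $\Sigma_2$ strictly contains that of $\Sigma_1$, yet it is a translate with equal volume. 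That is a contradiction. Alternatively, if $\Omega$ were outside both, then $\Omega$ would be unbounded with two compact boundary components. One would then fall back to Theorem~\ref{main-thm} with the $\mathbb{R}^{n+1}$ maximum principle applied at the farthest point, i.e. compare with a large sphere, giving $H>0$ somewhere — which gives no contradiction by itself. So I expect the main care is the orientation/volume argument here.

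The main obstacle is justifying the global rigidity conclusion of the lemma, namely that tangency implies the whole component coincides, and then ruling out the coincidence case by the volume comparison.
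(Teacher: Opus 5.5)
Your reduction is the paper's proof. The paper places the compact \(\Sigma_1\) in \(B(0,R/100)\) with \(R>100|x_1-y_1|\), notes that no minimizing pair for the truncated problem can have a point on \(\partial B(0,R)\), and cites Lemma~\ref{distance-lemma}. Your second paragraph already contradicts the lemma's literal statement (some minimizing pair lies on \(\partial B(0,R)\)), so the stronger ``every minimizing pair'' reading is unnecessary.

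Your extra scrutiny of the lemma is reasonable. Its maximum-principle step only yields \(\Sigma_1+v=\Sigma_2\) with \(v=y_0-x_0\), and the paper does not spell out why this is impossible for compact \(\Sigma_1\). But your exclusion of this case has a hole. The claim that \(\Omega\) lies on the bounded side of \(\Sigma_2\) ``being between them'' is unjustified, and you explicitly leave open the configuration where \(\Omega\) is exterior to both surfaces. (For the open--closed step, you should also say why the maximum principle reapplies at other coincidence points: each \(z\) with \(z+v\in\Sigma_2\) gives a pair realizing \(d(\Sigma_1,\Sigma_2)\), so the same tangency picture holds there.)

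The exterior case is not an obstruction; you have the sign backwards. In the paper's convention \(\nu\) points into \(\Omega\), so in that configuration \(\nu_j\) points away from the bounded domain enclosed by \(\Sigma_j\). Let \(\rho_j=\max_{\Sigma_j}|x|\). Comparing with the sphere \(\partial B(0,\rho_j)\) at a point of \(\Sigma_j\) where this maximum is attained gives \(H_j\cdot\nu_j\le -n/\rho_j<0\) on each of the two compact components. Hence \(\inf H_{\Sigma_1}+\inf H_{\Sigma_2}<0\), contradicting the hypothesis.

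There are two other ways to close the gap:
\begin{itemize}
\item \emph{Orientations.} The translated normal of \(\Sigma_1\) equals \(-\nu_2\) at \(y_0\), hence on all of the connected \(\Sigma_2\). This forces the nested configuration, where your volume argument does apply.
\item \emph{Planarity (simplest).} Once \(\Sigma_1+v=\Sigma_2\) with \(|v|=d(\Sigma_1,\Sigma_2)\), every \((z,z+v)\) is a minimizing pair, so \(v\perp T_z\Sigma_1\) for all \(z\in\Sigma_1\). Then \(\langle z,v\rangle\) is constant on the connected \(\Sigma_1\), making it an open and closed subset of a hyperplane, i.e.\ a whole hyperplane. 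This contradicts compactness with no volume or orientation bookkeeping.
\end{itemize}
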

\begin{proof}
  Suppose \(\Sigma_1\) is compact, pick any fixed \(x_1\in\Sigma_1,y_1\in\Sigma_2\). Choose \(R\) large, such
  that \(\Sigma_1\subset B(0,R/100)\), \(x_1,y_1\in B(0,R)\), and \(R>100|x_1-y_1|\). Then
  \(d(\Sigma_1\cap\ov{B(0,R)}),\Sigma_2\cap\ov{B(0,R)})\) cannot be achieved on \(\partial B(0,R)\).
  This ia a contradiction to the lemma.
\end{proof}

\begin{lemma}\label{ball-lemma}
  Suppose \(M=\mathbb{R}^{n+1}\) or \(\mathbb{H}^{n+1}\), \(\Sigma\subset M\) is a smooth embedded
  hypersurface, geodesic ball \(B(x,R)\) lies fully on one side of \(\Sigma\) and touches \(\Sigma\) at a
  point \(y\). Let \(\nu\) be the inward unit normal of \(B(x,R)\) at \(y\), then
  \(H_\Sigma (y)\cdot\nu\le H_{\partial B(x,R)}(y)\cdot\nu\), where \[
    H_{\partial B(x,R)}\cdot\nu=\frac{n}{R} \quad\text{or}\quad n+\frac{2n}{e^{2R}-1}\quad\text{respectively}
  .\] 
\end{lemma}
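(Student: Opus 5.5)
This is a comparison principle for mean curvature: at the touching point \(y\), \(\Sigma\) lies on one side of the sphere \(\partial B(x,R)\), so its second fundamental form is dominated by the sphere's. Taking traces then gives the mean curvature inequality. The explicit values come from the standard computation of the Hessian of the distance function \(r=d(x,\cdot)\).

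First, since \(B(x,R)\) lies on one side of \(\Sigma\) and touches it at \(y\), the two hypersurfaces \(\Sigma\) and \(\partial B(x,R)\) are tangent at \(y\) and share the normal line there. Near \(y\), write both as normal graphs over the common tangent plane \(T_y\Sigma\), using geodesic normal coordinates at \(y\) with normal direction \(\nu\) (the inward normal of the ball). Let \(f_\Sigma\) and \(f_S\) be the graphing functions of \(\Sigma\) and of the sphere. The one-sidedness says \(\Sigma\) stays outside the open ball, so \(f_\Sigma\le f_S\) near \(y\) (with heights measured in the \(\nu\) direction), and equality holds at \(y\) together with the gradients. Second-order Taylor expansion then gives \(\nabla^2 f_\Sigma(y)\le \nabla^2 f_S(y)\) as quadratic forms. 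Since the second fundamental forms with respect to \(\nu\) at \(y\) are exactly these Hessians, we get \(A_\Sigma(y)\le A_{\partial B}(y)\). Taking the trace with respect to the common induced metric at \(y\) gives \(H_\Sigma(y)\cdot\nu\le H_{\partial B(x,R)}(y)\cdot\nu\).

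Second, we compute the sphere's mean curvature. Along \(\partial B(x,R)\) the inward normal is \(-\nabla r\), and the mean curvature with respect to it is \(\lap r\), which the paper already records as \(n\lambda'(R)/\lambda(R)\). In \(\mathbb{R}^{n+1}\) this equals \(n/R\). In \(\mathbb{H}^{n+1}\) it equals \(n\coth R=n+\frac{2n}{e^{2R}-1}\). The sign is positive because the sphere is mean convex toward its center.

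The main obstacle is keeping the sign conventions straight: one has to confirm that "\(B\) on one side" forces \(\Sigma\) to bend less toward \(\nu\) than the sphere does, and not the reverse. In a careful proof I would check this orientation against the case where \(\Sigma\) is itself a larger sphere containing \(B\), where the inequality must hold.
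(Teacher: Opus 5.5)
The paper states this lemma without proof and uses it as the standard comparison principle; your argument is that standard proof and is correct. It runs: tangency at \(y\), then \(f_\Sigma\le f_S\), which gives \(A_\Sigma(y)\le A_{\partial B(x,R)}(y)\) with respect to \(\nu\) (normal coordinates at \(y\), where the Christoffel symbols vanish, make this valid in \(\mathbb{H}^{n+1}\) too), then tracing, and finally \(H_{\partial B(x,R)}\cdot\nu=\Delta r=n\lambda'(R)/\lambda(R)\), which equals \(n/R\) or \(n\coth R=n+\frac{2n}{e^{2R}-1}\).

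Your sign orientation is right. A coordinate-free version of the same argument: \(r|_\Sigma\) has a local minimum at \(y\) with \(\nabla r(y)=-\nu\), so \(0\le\Delta_\Sigma r(y)=\Delta r(y)-H_\Sigma(y)\cdot\nu\).
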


\begin{proof}[Proof of theorem~\ref{main-thm-est}]
  \(M=\mathbb{R}^{n+1}\), let \(c_j=\inf_{A_R}H_j\), suppose for contradiction that \(c_1+c_2\ge 32nR^{-1}\).
  Without loss of generality we can assume \(c_2\ge 16nR^{-1}\). Let \(x_0 \in \Sigma_1\) such that
  \(|x_0|=2R/3\). Since \(\Sigma_1\) is non-compact and connected, \(x_0\) exists for any sufficiently
  large \(R\).

  If \(d(x_0,\Sigma_2)>R/3\), consider the ball \(B(x_0,2nc_2^{-1})\). Since \(2nc_2^{-1}\le R/8<R/3\),
  it lies inside \(A_R\) and does not touch \(\Sigma_2\). There also exists \(y_0 \in \Sigma_2\) with
  \(|y_0|=2R/3\), rotate the ball \(B(x_0,2nc_2^{-1})\) around the origin towards \(y_0\). Apply
  lemma~\ref{ball-lemma} when it touches \(\Sigma_2\) for the first time, we get \[
    \frac{c_2}{2}=H_{\partial B(x_0,2nc_2^{-1})}\ge H_{\Sigma_2}(\text{touching point})\ge c_2
  .\] This is a contradiction. Hence \(d(x_0,\Sigma_2)\le R/3\). Apply lemma~\ref{ball-lemma} again to
  \(B(x_0,d(x_0,\Sigma_2))\) and \(\Sigma_2\), we have \[
    \frac{n}{d(x_0,\Sigma_2)}=H_{\partial B(x_0,d(x_0,\Sigma_2))}
    \ge H_{\Sigma_2}(\text{touching point})\ge c_2
  .\] Hence \(d(x_0,\Sigma_2)\le nc_2^{-1}\).

  Let \(\gamma_0\) be the geodesic from \(x_0\) to \(q_0 \in \Sigma_2\) that realizes \(d(x_0,\Sigma_2)\).
  Cut the initial part of \(\gamma_0\) to make it in \(\Omega\) if needed. We have \(L(\gamma_0)
  \le nc_2^{-1}\).  Since \[
    L(\gamma_0)+R/4\le nc_2^{-1}+R/4<R/3=d(x_0,\partial A_R)
  ,\] we see \(B(q_0,R/4)\subset A_R\). Also we have \(L_0 (R/4)^{-1}\le\frac{1}{4}\). Move the origin 
  to \(q_0\) and apply the main estimate~\ref{main-est} to \(B(q_0,R/4)\) and \(\gamma_0\), we get \[
    32nR^{-1}\le c_1+c_2\le 80nL_0R^{-2}\le 80n^2c_2^{-1}R^{-2}\le 5nR^{-1}
  .\] This is a contradiction.
\end{proof}

\begin{proof}[Proof of theorem~\ref{main-thm-est-2}]
  First we assume \(d(\Sigma_1,\Sigma_2)=0\). Let \(c_j=\inf_{A_{\beta,R}}H_{\Sigma_j}\). We have \[
    d_R:=d(\Sigma_1\cap\ov{B(0,R)},\Sigma_2\cap\ov{B(0,R)})
  \] is decreasing in \(R\), and its limit when \(R\to\infty\) is \(d(\Sigma_1,\Sigma_2)\).
  By lemma~\ref{distance-lemma}, let \(x_0\in\Sigma_1,y_0\in\Sigma_2\) achieve \(d_{(1-\delta)R}\) where
  \(\delta=(1-\beta)/2\), wlog assume \(|x_0|=(1-\delta)R\). We see \(d(x_0,\Sigma_2)<d_{(1-\delta)R}\).
  Let \(y_0\in\Sigma_2\) achieves \(d(x_0,\Sigma_2)\). Note that \(d_{(1-\delta)R}\) is small when \(R\) is
  large. Apply main estimate~\ref{main-est} to \(B(y_0,\delta R-d_{(1-\delta)R})\), we see \[
    (c_1+c_2)R^2\le C(\beta)n d_{(1-\delta)R}\lrr 0 \quad \text{as }R\to\infty
  .\] This proves the theorem.

  When \(\mathcal{d}:=d(\Sigma_1,\Sigma_2)>0\), we can pick a sequence \(\{x_k\}_{k>0}\subset\Sigma_1\) and
  \(\{y_k\}_{k>0}\subset\Sigma_2\) such that \[
    \min\{|x_k|,|y_k|\}\to\infty,\quad d(x_k,y_k)\to\mathcal{d}
  .\] Up to rotation and subsequence we may assume that \(x_k-y_k\) converges to \(\mathcal{d}e_0\).
  Let \(\Sigma_2'=\Sigma_2+\mathcal{d}e_0\). Apply previous argument to \(\Sigma_1\) and \(\Sigma_2'\),
  we see for any \(0<\beta<1\), \[
    \lim_{R\to\infty}(\inf_{A_{\beta,R}}H_{\Sigma_1}+\inf_{A_{\beta,R}}H_{\Sigma_2'})R^2=0
  .\] Since \(\Sigma_2\cap A_{\beta,R}\supset(\Sigma_2'\cap A_{\frac{1+\delta}{1-\delta}\beta,(1-\delta)R})-\mathcal{d}e_0\) 
  for any small \(\delta>0\) and \(R\) sufficiently large. Hence \[
    \lim_{R\to\infty}(\inf_{A_{\beta,R}}H_{\Sigma_1}+\inf_{A_{\beta,R}}H_{\Sigma_2})R^2
    \le\lim_{R\to\infty}(\inf_{A_{\frac{1+\delta}{1-\delta}\beta,(1-\delta)R}}H_{\Sigma_1}
    +\inf_{A_{\frac{1+\delta}{1-\delta}\beta,(1-\delta)R}}H_{\Sigma_2'})R^2=0
  .\] 
\end{proof}

\begin{proof}[Proof of theorem~\ref{main-thm-est-Hn}]
  Following the same argument in the proof of theorem~\ref{main-thm-est}, let \(c_j=\inf_{A_R}H_j\), suppose
  in the contrary \(c_1+c_2\ge 2n+2nCR^{-\frac{2}{3}}\), and without loss of generality assume
  \(c_2\ge n+nCR^{-\frac{2}{3}}\).

  Let \(\delta_2=(c_2-n)/n\ge CR^{-\frac{2}{3}}\). For sufficiently large \(R\), pick \(x_0\in\Sigma_1\) such
  that \(|x_0|=2R/3\). If \(d(x_0,\Sigma_2)>R/3\), consider the ball \(B(x_0,2\delta_2^{-1})\). Since
  \(2\delta_2^{-1}\le 2R^{\frac{2}{3}}/C<R\), it lies inside \(A_R\) and does not touch \(\Sigma_2\) for
  sufficiently large \(R\). There also exists \(y_0\in\Sigma_2\) with \(|y_0|=2R/3\), rotate the ball
  \(B(x_0,2\delta_2^{-1})\) around the origin towards \(y_0\). Apply lemma~\ref{ball-lemma} when it touches
  \(\Sigma_2\) for the first time, we get \[
    n+\frac{n\delta_2}{2}\ge n+\frac{2n}{e^{4\delta_2^{-1}}-1}=H_{\partial B(x_0,2\delta_2^{-1})}
    \ge H_{\Sigma_2}(\text{touching point})\ge c_2
  .\] This is a contradiction. Hence \(d(x_0,\Sigma_2)\le 2\delta_2^{-1}\). Apply lemma~\ref{ball-lemma}
  again to \(B(x_0,d(x_0,\Sigma_2))\) and \(\Sigma_2\), we have \[
    n+\frac{n}{d(x_0,\Sigma_2)}\ge n+\frac{2n}{e^{2d(x_0,\Sigma_2)}-1}=H_{\partial B(x_0,d(x_0,\Sigma_2))}
    \ge H_{\Sigma_2}(\text{touching point})\ge c_2
  .\] Hence \(d(x_0,\Sigma_2)\le \delta_2^{-1}\).

  Let \(\gamma_0\) be the geodesic from \(x_0\) to \(q_0 \in \Sigma_2\) that realizes \(d(x_0,\Sigma_2)\).
  Cut the initial part of \(\gamma_0\) to make it in \(\Omega\) if needed. We have \(L(\gamma_0)\le 
  \delta_2^{-1}\). Since for sufficiently large \(R\), \[
    L(\gamma_0)+R/4\le\delta_2^{-1}+R/4<R/3=d(x_0,\partial A_R)
  ,\] we see \(B(q_0,R/4)\subset A_R\). Also we have \(L_0 (R/4)^{-1}<\frac{1}{4}\). Move the origin 
  to \(q_0\) and apply the main estimate~\ref{main-est} to \(B(q_0,R/4)\) and \(\gamma_0\), we get 
  \begin{align*}
    2nCR^{-\frac{2}{3}}\le c_1+c_2-2n&\le 80nL_0R^{-2}+32nL_0^{\frac{1}{2}}R^{-1} \\
    &\le 5nR^{-1}+32nC^{-\frac{1}{2}}R^{-\frac{2}{3}} \\
    &=32nC^{-\frac{1}{2}}R^{-\frac{2}{3}}+O(R^{-1})
  .\end{align*}
  When \(2C=13\), \(2nC>32nC^{-\frac{1}{2}}\), this is a contradiction.
\end{proof}

\printbibliography{}

\end{document}